\documentclass[12pt]{article}
\pagestyle{plain} \baselineskip 0.22in \textwidth 16.80cm
\textheight 22.0cm \topmargin -0.50cm \oddsidemargin -0.3cm
\evensidemargin -0.3cm
\parskip 0.2cm
\baselineskip 0.6cm \makeatletter

\usepackage{graphicx}
\usepackage{color,latexsym,amsfonts,amssymb}
\usepackage{bm}     
\usepackage{amsthm} 
\usepackage{amsmath}
\usepackage{booktabs} 
\usepackage{boxedminipage}  
\usepackage{natbib} 
\usepackage{algorithm,algpseudocode}  
\usepackage{subfigure}

\DeclareMathOperator*{\argmax}{argmax}
\DeclareMathOperator*{\argmin}{argmin}

\newtheorem{theorem}{Theorem}
\newtheorem{corollary}{Corollary}
\newtheorem{lemma}{Lemma}

\newcommand{\CVaR}{\mathrm{CVaR}}

\newcommand{\T}{\mathrm{T}}
\vspace{-1.2cm}

\usepackage{epstopdf}

\begin{document}

\title{\vspace{-1cm} Global Algorithms for Mean-Variance Optimization in Markov Decision Processes}
\author{Li Xia, Shuai Ma \thanks{L. Xia and S. Ma are both with the School of Business, Sun Yat-Sen University, Guangzhou 510275, China. (email: xiali5@sysu.edu.cn).}}
\date{}

\maketitle

\vspace{-1cm}

\begin{abstract}
Dynamic optimization of mean and variance in Markov decision
processes (MDPs) is a long-standing challenge caused by the failure
of dynamic programming. In this paper, we propose a new approach to
find the globally optimal policy for combined metrics of
steady-state mean and variance in an infinite-horizon undiscounted
MDP. By introducing the concepts of pseudo mean and pseudo variance,
we convert the original problem to a bilevel MDP problem, where the
inner one is a standard MDP optimizing pseudo mean-variance and the
outer one is a single parameter selection problem optimizing pseudo
mean. We use the sensitivity analysis of MDPs to derive the
properties of this bilevel problem. By solving inner standard MDPs
for pseudo mean-variance optimization, we can identify worse policy
spaces dominated by optimal policies of the pseudo problems. We
propose an optimization algorithm which can find the globally
optimal policy by repeatedly removing worse policy spaces. The
convergence and complexity of the algorithm are studied. Another
policy dominance property is also proposed to further improve the
algorithm efficiency. Numerical experiments demonstrate the
performance and efficiency of our algorithms. To the best of our
knowledge, our algorithm is the first that efficiently finds the
globally optimal policy of mean-variance optimization in MDPs. These
results are also valid for solely minimizing the variance metrics in
MDPs.
\end{abstract}

\textbf{Keywords}: Markov decision process, mean-variance
optimization, bilevel MDP, pseudo mean, pseudo variance, global
optimum

\section{Introduction}\label{section_intro}
Mean-variance optimization is an important model for the risk
control in finance engineering, which was first proposed by
\cite{Markowitz52} for single-period portfolio management. Extending
to multi-period scenarios is a natural but challenging research
topic. This is because the variance criterion in multi-period is not
additive, which induces the time inconsistency and the failure of
dynamic programming. This important topic attracts research
attention over past decades
\citep{Dai21,Gao13,Hernandez99,Sobel94,Sobel82}, while it is not
completely solved yet.

Since Markov models are widely used to study multi-period stochastic
systems, there is rich literature on Markov decision processes
(MDPs) with variance related criteria, either for discounted or
undiscounted, discrete-time or continuous-time, discrete-state or
continuous-state, finite-horizon or infinite-horizon MDPs. Excellent
works can be referred to
\cite{Chung94,Filar85,Haskell13,Hernandez99,Sobel82,Sobel94,Guo09b},
just to name a few. Many of these works study the variance
minimization of accumulated rewards in a policy set, in which the
mean performance has already been optimized. In such scenarios, the
variance minimization problem can be equivalently converted to
another standard MDP with a new cost function
\citep{Guo12,Huang18,Sobel82,Xia18}. These approaches are not
applicable to directly optimize variance or mean-variance combined
metrics in MDPs when the mean performance is not optimized. Another
method to study the mean-variance optimization of MDPs is to
reformulate these problems as mathematical programming models and to
do further analytical investigations
\citep{Chung94,Haskell13,Sobel94}. How to efficiently solve these
mathematical programs is challenging.

Another research stream on multi-period mean-variance optimization
is from the perspective of stochastic control. The seminal work by
\cite{Li00,Zhou00} formulated the mean-variance portfolio selection
problem as a linear quadratic (LQ) control problem and used an
embedding method to develop an iterative procedure to analytically
solve this problem. There are numerous works following this research
line \citep{Gao13,Zhou04,Zhu04} and interested audience can refer to
a recent survey paper \citep{Cui22}. However, these works use an LQ
model with linear state transitions, which may properly characterize
the portfolio selection problem but lack much generalization
compared with Markov models.

Recently there are also some works that study mean-variance
optimization in the regime of reinforcement learning. Although the
principle of dynamic programming fails, gradient-based algorithms
for parameterized policies (represented by neural networks) still
work. Most of these studies focus on improving the sampling
efficiency for learning the gradient estimators for variance related
metrics \citep{Borkar10,Prashanth13,Tamar12}. A recent progress is
to reformulate mean-variance optimization with Fenchel duality
\citep{Xie18}, and to adopt gradient-based algorithms to find local
optima \citep{Bisi20,Zhang21}. However, all these gradient-based
learning algorithms suffer from slow convergence speed and trap into
local optima. Globally solving the mean-variance optimization
problem in MDPs is still an unanswered question.

In this paper, we study global algorithms for the mean-variance
optimization problem in an infinite-horizon discrete-time
undiscounted MDP. The mean and variance of rewards are measured in a
steady-state environment, similar to those in the works by
\cite{Bisi20,Chung94,Sobel94,Xia16a}. By introducing an auxiliary
variable called pseudo mean $y \in \mathbb R$, we convert the
steady-state mean-variance optimization problem to a bilevel MDP
problem, where the inner level is a standard MDP $\mathcal M(y)$
optimizing the so-called pseudo mean-variance and the outer level is
a single parameter selection problem optimizing the pseudo mean $y$.
With the sensitivity analysis of MDPs, we show that the optimal
value of the pseudo mean-variance optimization problem $\mathcal
M(y)$ is a convex piecewise quadratic function with respect to $y$
and its global optimum equals the optimum of the mean-variance
optimization problem. We further discover policy dominance
properties which help us discard the worse policies dominated by the
optimal policy of $\mathcal M(y)$. Thus, the optimization complexity
can be significantly reduced. Based on these properties, we develop
an iterative algorithm which is shown to find the global optimum of
the mean-variance optimization problem after a finite number of
iterations. The computation complexity and some variants of the
algorithm are also studied. Compared with the literature work only
capable of finding a local optimum of mean-variance optimization in
MDPs \citep{Xia20}, our algorithms guarantee a global convergence.
The performance and efficiency of our algorithms are also
demonstrated by numerical experiments. To the best of our knowledge,
our work is the first to compute the globally optimal policies of
mean-variance optimization in MDPs.

The rest of the paper is organized as follows. In
Section~\ref{section_model}, we give the MDP formulation for the
mean-variance optimization problem. Section~\ref{section_result}
presents the main results of this paper, including the policy
dominance property and the algorithmic analysis. Numerical
experiments are conducted in Section~\ref{section_experiment} to
demonstrate the performance of our algorithms. Finally, we conclude
this paper in Section~\ref{section_conclusion}.

\section{Problem Formulation}\label{section_model}
Consider an infinite-horizon discrete-time MDP denoted by a tuple
$\mathcal M = \langle \mathcal S, \mathcal A, \mathcal P, \bm r
\rangle$, where $\mathcal S=\{1,2,\dots,S\}$ is the state space,
$\mathcal A=\{a_1,a_2,\dots,a_A\}$ is the action space, $\mathcal P
: \mathcal S \times \mathcal A \overset{D}{\mapsto} \mathcal S$ is
the state transition probability kernel with element $p(j|i,a)$
where $\overset{D}{\mapsto}$ represents a mapping to the
distribution on $\mathcal S$, and $\bm r : \mathcal S \times
\mathcal A \mapsto \mathbb R$ is the reward function with element
$r(i,a)$, $i,j \in \mathcal S$, $a \in \mathcal A$. When the system
is in state $i$ and action $a$ is adopted, it will transit to the
next state $j$ with probability $p(j|i,a)$ and a reward $r(i,a)$ is
incurred. Since deterministic policies can attain optimal mean and
variance in MDPs \citep{Haskell13,Xia20}, we only consider
stationary deterministic policies $d: \mathcal S \mapsto \mathcal
A$, where $d(i) \in \mathcal A$ indicates the action adopted in
state $i$. The corresponding policy space is denoted by $\mathcal D$
and we assume that the MDP with any policy $d \in \mathcal D$ is a
unichain. When a policy $d$ is adopted, the state transition
probability matrix is denoted by $\bm P^d$ and its $(i,j)$-th
element is $p(j|i,d(i))$, $i,j \in \mathcal S$. The associated
steady-state distribution is denoted by an $S$-dimensional row
vector $\bm \pi^d:=(\pi^d(i))_{i \in \mathcal S}$. Obviously, we
have
\begin{equation*}
\bm \pi^d \bm P^d = \bm \pi^d,  \quad  \bm \pi^d \bm e = 1, \quad
\bm P^d \bm e = \bm e,
\end{equation*}
where $\bm e$ is a column vector of 1's with a proper dimension
size. We consider long-run performance metrics of this MDP, which
are independent of the initial state at time 0. The \emph{long-run
average (mean)} reward of the MDP under policy $d$ is defined as
\begin{equation}
\mu^d := \lim\limits_{T \rightarrow \infty} \mathbb E^d
\left\{\frac{1}{T}\sum_{t=0}^{T-1}r(X_t, A_t) \right\} = \bm \pi^d
\bm r^d,
\end{equation}
where $\mathbb E^d$ indicates the expectation under policy $d$,
$X_t$ is the system state at time $t$, $A_t = d(X_t)$ is the action
adopted at time $t$, $\bm r^d$ is an $S$-dimensional column vector
whose element is $r(i,d(i))$, $i \in \mathcal S$. Similarly, the
\emph{long-run variance} (or steady-state variance) of the MDP under
policy $d$ is defined as \citep{Xia16a,Xia20}
\begin{equation}
\sigma^d := \lim\limits_{T \rightarrow \infty}\mathbb E^d
\left\{\frac{1}{T}\sum_{t=0}^{T-1}[r(X_t, A_t) - \mu^d]^2 \right\} =
\bm \pi^d (\bm r^d - \mu^d \bm e)_{\odot}^2,
\end{equation}
where $(\bm r^d - \mu^d \bm e)_{\odot}^2$ is the component-wise
square of vector $(\bm r^d - \mu^d \bm e)$, i.e.,
\begin{equation*}
(\bm r^d - \mu^d \bm e)_{\odot}^2 := ((r(1,d(1)) - \mu^d)^2, \
(r(2,d(2)) - \mu^d)^2, \ \dots, \ (r(S,d(S)) - \mu^d)^2 )^\T.
\end{equation*}
When the finite Markov chain is a unichain, 
we can view $r(X_t,A_t)$ as a random variable whose value
realization set is $\{r(i,d(i)): i\in \mathcal S\}$ and distribution
is $(\bm P^d)^t \bm \nu$, where $\bm \nu$ is the vector of initial
state distribution. We can verify that
\begin{eqnarray*}
\mu^d &=& \lim\limits_{t \rightarrow \infty} \mathbb E[r(X_t,A_t)], \\
\sigma^d &=& \lim\limits_{t \rightarrow \infty} \mathbb
{VAR}[r(X_t,A_t)].
\end{eqnarray*}

Mean-variance optimization was originally proposed by
\cite{Markowitz52} for portfolio selection, where decision makers
usually aim at maximizing the mean return while minimizing the
variance risk, which is a multi-objective optimization problem.
Usually, the Pareto frontier composed of Pareto efficient solutions
is the optimization goal, which is illustrated by
Fig.~\ref{fig_pareto}. A common way of obtaining Pareto optima is to
optimize the combined objective
\begin{equation}\label{eq_eta}
\eta^d := \beta \sigma^d - \mu^d,
\end{equation}
where $\beta \geq 0$ is the tradeoff weight between mean and
variance. Therefore, our goal is to solve the \emph{mean-variance
optimization} problem:
\begin{equation}\label{eq_problem_origin}
\hspace{-5cm} \mbox{(P0): \hspace{4cm} }
\begin{array}{cc}
&\eta^* = \min\limits_{d \in \mathcal D}\{ \beta \sigma^d - \mu^d \}, \\
&d^* \in \argmin\limits_{d \in \mathcal D}\{ \beta \sigma^d -
\mu^d\}.
\end{array}
\end{equation}
Note that $\eta^*$ and $d^*$ depend on $\beta$, and we may also use
$\eta^*(\beta)$ and $d^*(\beta)$ if necessary. In
Fig.~\ref{fig_pareto}, the red star points are Pareto efficient
solutions which dominate the black dot solutions. The dashed curve
is the Pareto frontier which can be obtained by solving
\eqref{eq_problem_origin} with different $\beta \geq 0$. We can also
observe that the dashed line is tangent to the Pareto frontier,
where the slope is $\beta$ and the tangent point is
$(\sigma^{d^*(\beta)}, \mu^{d^*(\beta)})$.

\begin{figure}[htbp]
\centering
\includegraphics[width=.65\columnwidth]{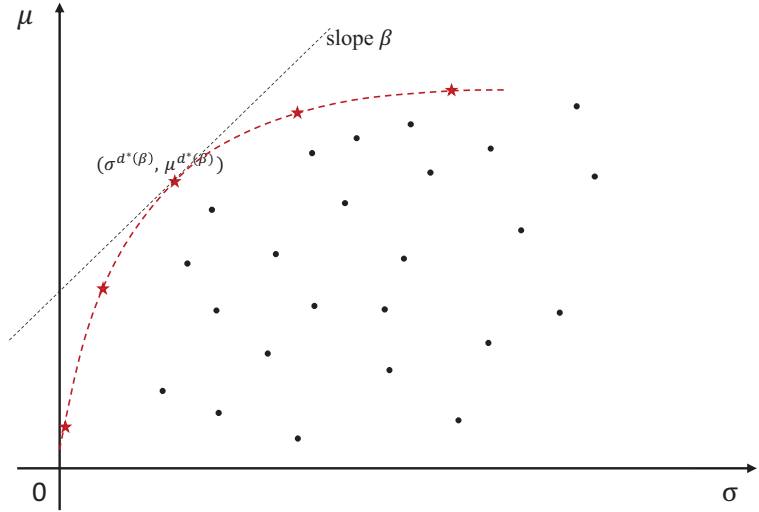}
\caption{Illustration of the Pareto frontier in mean-variance
optimization.}\label{fig_pareto}
\end{figure}

How to efficiently solve \eqref{eq_problem_origin} is the key of the
mean-variance optimization in MDPs. Since the variance function
$(r(i,d(i)) - \mu^d)^2$ depends on history and future behaviors
through $\mu^d$, it is not either additive or Markovian. The
mean-variance optimization problem \eqref{eq_problem_origin} does
not fit a standard model of MDPs and the principle of dynamic
programming fails \citep{Chung94,Sobel94,Xia16a}. Although there is
a recent progress on this problem by using the technique of
sensitivity-based optimization instead of the traditional dynamic
programming \citep{Xia20}, it can only find a local optimum of this
mean-variance optimization problem. A local optimum is not
guaranteed as a Pareto efficient solution. Thus, finding the global
optimum of \eqref{eq_problem_origin} is still an unsolved problem in
the mean-variance optimization of MDPs and we accomplish this
challenge in the rest of this paper.

\section{Main Results}\label{section_result}

First, we introduce the concept of \emph{pseudo mean} and
\emph{pseudo variance} of an MDP under policy $d \in \mathcal D$
\citep{Xia16a,Xia20}:
\begin{equation}\label{eq_psuvar}
\tilde{\sigma}^d (y) = \bm \pi^d (\bm r^d - y \bm e)^2_{\odot},
\qquad y \in \mathbb R,
\end{equation}
where $\tilde{\sigma}^d (y)$ is called the pseudo variance of the
MDP with the pseudo mean $y$. We can derive that the difference
between the pseudo variance and the real variance is
\begin{eqnarray}\label{eq_deltasigma}
\Delta^d(y) &:=& \tilde{\sigma}^d (y) - \sigma^d = \bm \pi^d (\bm
r^d -
y \bm e)^2_{\odot} - \bm \pi^d (\bm r^d - \mu^d \bm e)^2_{\odot} \nonumber\\
&=& \sum_{i \in \mathcal S} \pi^d(i) [(r(i,d(i))-y)^2 -
(r(i,d(i))-\mu^d)^2] \nonumber\\
&=& (y - \mu^d)^2 \geq 0.
\end{eqnarray}
We call $\Delta^d(y)$ the \emph{variance distortion} caused by the
pseudo mean $y$. Interestingly, we observe that the pseudo variance
$\tilde{\sigma}^d (y)$ is a \emph{convex quadratic} function of $y$,
since $\tilde{\sigma}^d (y) = \sigma^d + (y - \mu^d)^2$. When the
pseudo mean $y$ equals the real mean $\mu^d$, the variance
distortion is zero and the pseudo variance attains its minimum which
is exactly the real variance, i.e.,
\begin{equation}\label{eq_psuvarmin}
\sigma^d = \min_{y \in \mathbb R}\tilde{\sigma}^d (y) =
\tilde{\sigma}^d (y^*)\Big|_{y^*=\mu^d}.
\end{equation}

\noindent\textbf{Remark~1.} The above property of variance is
analogous to CVaR (Conditional Value at Risk) discovered by
\cite{Rockafellar00}: the CVaR of random variable $X$ at probability
level $\alpha$ is defined as $\CVaR_{\alpha}(X):= \mathbb E[X|X \geq
F^{-1}_{X}(\alpha)]$, and equals $\min\limits_{y \in \mathbb R}
\mathbb E[y + \frac{1}{1-\alpha}[X-y]^+]$, where $F^{-1}_{X}(\cdot)$
is the inverse distribution function of $X$,
$[X-y]^+:=\max\{0,X-y\}$, $\mathbb E[y + \frac{1}{1-\alpha}[X-y]^+]$
is a convex function of $y$ and its minimum attains at $y^* =
F^{-1}_{X}(\alpha)$.

With this property \eqref{eq_psuvarmin}, we can convert the original
mean-variance optimization problem to a bilevel MDP problem and
directly derive the following lemma.
\begin{lemma}[Bilevel MDP]\label{lemma1}
The mean-variance optimization problem \eqref{eq_problem_origin} is
equivalent to a bilevel MDP problem where the inner one is a
standard MDP with cost function $\beta (\bm r - y \bm e)^2_{\odot} -
\bm r$:
\begin{equation}\label{eq_bilevel}
\eta^* = \min_{d \in \mathcal D}\{ \beta \sigma^d - \mu^d \} =
\min_{y \in \mathbb R}\min_{d \in \mathcal D}\{\beta
\tilde{\sigma}^d (y) - \mu^d \}.
\end{equation}
\end{lemma}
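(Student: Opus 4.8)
The plan is to unbundle the statement into two independent observations and then combine them with an elementary interchange of minimizations. The first observation is that, for each fixed policy, minimizing the pseudo variance over the auxiliary parameter $y$ recovers the true variance; the second is that, for each fixed $y$, the inner problem is literally an average-cost MDP. Neither requires new machinery beyond the identities already established in \eqref{eq_deltasigma}--\eqref{eq_psuvarmin}.

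First I would fix $d \in \mathcal D$ and evaluate $\min_{y\in\mathbb R}\{\beta\tilde\sigma^d(y) - \mu^d\}$. Since $\mu^d$ is independent of $y$ and $\beta \ge 0$, this equals $\beta\big(\min_{y\in\mathbb R}\tilde\sigma^d(y)\big) - \mu^d$. By \eqref{eq_psuvarmin}, which follows from the exact quadratic identity $\tilde\sigma^d(y) = \sigma^d + (y-\mu^d)^2$ derived in \eqref{eq_deltasigma}, the inner minimum is finite, attained at $y=\mu^d$, and equal to $\sigma^d$. Hence $\min_{y\in\mathbb R}\{\beta\tilde\sigma^d(y) - \mu^d\} = \beta\sigma^d - \mu^d$ for every $d \in \mathcal D$. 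Taking the minimum over $d$ of both sides, and noting that the right-hand expression of \eqref{eq_bilevel} is a joint minimization of a single function over the product set $\mathbb R \times \mathcal D$ so that $\min_{y}\min_{d} = \min_{d}\min_{y}$, I obtain $\min_{y\in\mathbb R}\min_{d\in\mathcal D}\{\beta\tilde\sigma^d(y) - \mu^d\} = \min_{d\in\mathcal D}\{\beta\sigma^d - \mu^d\} = \eta^*$, which is the claimed equality.

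To justify the phrase ``standard MDP'' for the inner level, I would rewrite its objective in per-stage form: for fixed $y$, $\beta\tilde\sigma^d(y) - \mu^d = \bm\pi^d\big(\beta(\bm r^d - y\bm e)^2_{\odot} - \bm r^d\big)$ is precisely the long-run average reward of the MDP with stage cost $c_y(i,a) := \beta(r(i,a)-y)^2 - r(i,a)$, i.e. of $\mathcal M(y) = \langle \mathcal S, \mathcal A, \mathcal P, \beta(\bm r - y\bm e)^2_{\odot} - \bm r\rangle$, a genuine average-cost MDP on the same unichain structure, for which dynamic programming applies.

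I do not expect a substantive obstacle here; the content is bookkeeping. The only points needing a word of care are the use of $\beta \ge 0$ when pulling $\beta$ through the inner minimization, and the fact that the inner minimum over $y$ is always attained (at $\mu^d$) so that no degeneracy or unboundedness arises. The conceptual payoff to flag in the write-up is that fixing $y$ severs the quadratic term from the unknown $\mu^d$, which is exactly what restores the Markov/additive structure that (P0) lacks.
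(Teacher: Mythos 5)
Your proof is correct and follows essentially the same route as the paper, which derives the lemma directly from the identity $\tilde\sigma^d(y)=\sigma^d+(y-\mu^d)^2$ and property \eqref{eq_psuvarmin}: minimize over $y$ for fixed $d$, then swap the order of the two minimizations over the product set. Your explicit remarks about $\beta\ge 0$ and attainment of the inner minimum at $y=\mu^d$ are sound bookkeeping that the paper leaves implicit.
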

The above bilevel MDP formulation is similar to the Fenchel duality
formulation \citep{Xie18}, while our formulation \eqref{eq_bilevel}
naturally comes from \eqref{eq_deltasigma} of pseudo variance which
was originally discovered by \cite{Xia16a}. The inner problem
$\min\limits_{d \in \mathcal D}\{\beta \tilde{\sigma}^d (y) - \mu^d
\}$ aims to optimize the pseudo mean-variance, which is a standard
MDP denoted by tuple $\mathcal M(y) := \langle \mathcal S, \mathcal
A, \mathcal P, \beta(\bm r - y \bm e)^2_{\odot} - \bm r \rangle$. We
can use traditional dynamic programming to solve this MDP. For
different outer variable of pseudo mean $y$, we have to solve
different MDP $\mathcal M(y)$. The number of solving inner MDPs is
equal to the number of $y \in \mathbb R$, which is computationally
intractable. Therefore, efficiently solving this bilevel MDP problem
\eqref{eq_bilevel} is challenging.

With \eqref{eq_psuvarmin}, we see that the optimal $y^*$ in
\eqref{eq_bilevel} satisfies $y^* = \mu^{d^*}$ for the optimal
policy $d^* \in \mathcal D$. Therefore, we can restrict $y$'s value
domain from $y \in \mathbb R$ to a much smaller set $y \in \mathcal
Y$, where $\mathcal Y:= \{\mu^d : \forall d \in \mathcal D \}$.
Although $\mathcal Y$ is still computationally intractable, we know
that $\mathcal Y \subset [\underline{r}, \overline{r}]$, where
$\underline{r}:= \min\limits_{i \in \mathcal S, a \in \mathcal A}
\{r(i,a)\}$ and $\overline{r}:= \max\limits_{i \in \mathcal S, a \in
\mathcal A}\{r(i,a)\}$. Therefore, the bilevel MDP
problem~\eqref{eq_bilevel} can be rewritten as
\begin{equation}\label{eq_bilevel2}
\eta^* = \min_{d \in \mathcal D}\{ \beta \sigma^d - \mu^d \} =
\min_{y \in [\underline{r}, \overline{r}]}\min_{d \in \mathcal
D}\{\beta \tilde{\sigma}^d (y) - \mu^d \}.
\end{equation}

Since the inner problem $\mathcal M(y^*)$ is a standard MDP, we can
derive a concise proof about the optimality of deterministic
policies (detailed proofs can also be referred to
\cite{Haskell13,Xia20}): Suppose $(y^*,d^*)$ is an optimal solution
to \eqref{eq_bilevel2}. It is well known that there exists a
deterministic policy $d_0$ which attains the minimum of standard MDP
$\min_{d}\{\beta \tilde{\sigma}^d (y^*) - \mu^d \}$. It is obvious
that $y^*$ must be the real mean of the MDP under policy $d_0$.
Thus, $\beta \tilde{\sigma}^{d_0} (y^*) - \mu^{d_0} = \beta
\sigma^{d_0} - \mu^{d_0}$, which indicates that the deterministic
policy $d_0$ attains the minimum of mean-variance performance.

When the pseudo mean $y$ is fixed, the inner standard MDP $\mathcal
M(y)$ is an auxiliary problem, and its long-run average performance
under policy $d$ is a combined performance $\tilde{\eta}^d (y) =
\beta \tilde{\sigma}^d(y) - \mu^d$. We also call $\mathcal M(y)$ a
\emph{pseudo mean-variance optimization} problem:
\begin{equation}\label{eq_auxiMDP}
\hspace{-5cm} (\mathcal M(y)): \hspace{3cm}
\begin{array}{cc}
&\tilde{\eta}^*(y) = \min\limits_{d \in \mathcal D}\{\beta
\tilde{\sigma}^d
(y) - \mu^d \}. \\
&\tilde{d}^*(y) \in \argmin\limits_{d \in \mathcal D}\{\beta
\tilde{\sigma}^d (y) - \mu^d \}.
\end{array}
\end{equation}
For notation simplicity, sometimes we may omit $y$, and use
$\tilde{\eta}^*$ and $\tilde{d}^*$ if no confusion caused.
Therefore, the bilevel MDP \eqref{eq_bilevel2} for mean-variance
optimization can be rewritten as below.
\begin{equation*}
\eta^* = \min_{y \in [\underline{r}, \overline{r}]} \{
\tilde{\eta}^*(y) \}.
\end{equation*}
If we plot a curve of $\tilde{\eta}^*(y)$ with respect to $y$, 
we can observe that $\eta^*$ is the global minimum of this curve at
point $y^*$ and the corresponding $\tilde{d}^*(y^*)$ is the optimal
policy of the original problem \eqref{eq_problem_origin}. From the
sensitivity analysis of MDPs, we can derive the following lemma.

\begin{lemma}[Critical points]\label{lemma2SA}
There exists a series of intervals $[y^c_{k-1}, y^c_{k}]$ with
$\bigcup\limits_{k=1,\dots,K}[y^c_{k-1}, y^c_{k}]=[\underline{r},
\overline{r}]$, in which the optimal policy of $\mathcal M(y)$
remains unvaried as $\tilde{d}^*_k := \tilde{d}^*(y)$ when $y \in
[y^c_{k-1}, y^c_{k}]$.
\end{lemma}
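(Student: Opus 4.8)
The plan is to reduce the statement to an elementary parametric‑sensitivity argument by exploiting the explicit quadratic form of the inner objective. First I would recall that, by the variance‑distortion identity \eqref{eq_deltasigma}, for any fixed policy $d \in \mathcal D$ the long‑run average cost of $\mathcal M(y)$ under $d$ is
\[
\tilde{\eta}^d(y) \;=\; \beta\tilde{\sigma}^d(y) - \mu^d \;=\; \beta\bigl(y - \mu^d\bigr)^2 + \bigl(\beta\sigma^d - \mu^d\bigr),
\]
that is, a parabola in $y$ with vertex $(\mu^d,\eta^d)$ and leading coefficient $\beta$, the \emph{same} for every $d$. Consequently, for any two policies $d,d' \in \mathcal D$ the $y^2$‑terms cancel and $\tilde{\eta}^d(y) - \tilde{\eta}^{d'}(y)$ is an \emph{affine} function of $y$. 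This is the MDP‑sensitivity fact underlying the lemma: as far as the ranking of policies is concerned, the family $\mathcal M(y)$ behaves like a standard MDP whose per‑stage cost depends affinely on the scalar parameter $y$ (the $\beta y^2$ contribution to the cost $\beta(\bm r-y\bm e)^2_\odot-\bm r$ is a policy‑independent constant).

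Second, using finiteness of $\mathcal D$ (indeed $|\mathcal D| = A^{S}$), I would form the finite collection of all pairwise ``crossing'' abscissae $\{\,y : \tilde{\eta}^d(y) = \tilde{\eta}^{d'}(y)\,\}$ over $d \ne d'$ in $\mathcal D$. Since each such equation is affine in $y$, each pair contributes either no point, exactly one point, or (when $\mu^d = \mu^{d'}$ and $\eta^d = \eta^{d'}$) the whole line; discarding the degenerate last case, we obtain a finite set of reals. Intersecting it with $[\underline{r},\overline{r}]$, adjoining the endpoints $\underline{r},\overline{r}$, and sorting yields $\underline{r} = y^c_0 < y^c_1 < \cdots < y^c_K = \overline{r}$.

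Third, I would argue that the optimal policy is constant on each piece. On an open interval $(y^c_{k-1}, y^c_k)$ no two of the functions $\tilde{\eta}^d(\cdot)$ cross, so their pointwise order — in particular the identity of a minimiser of $\{\tilde{\eta}^d(y)\}_{d \in \mathcal D}$ — is unchanged over the interval; fix one such minimiser and call it $\tilde{d}^*_k$. Since each $\tilde{\eta}^d$ is continuous, $\tilde{d}^*_k$ also attains the minimum at the two endpoints, so we may take $\tilde{d}^*(y) = \tilde{d}^*_k$ for every $y \in [y^c_{k-1}, y^c_k]$; existence of a minimiser for each $y$ is guaranteed because $\mathcal M(y)$ is, for each $y$, a standard unichain MDP over the finite set $\mathcal D$. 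The closed intervals $[y^c_{k-1}, y^c_k]$, $k=1,\dots,K$, overlap only at the breakpoints and their union is $[\underline{r},\overline{r}]$, which is precisely the claim.

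The step I expect to require the most care is the third one: the argmin need not be unique even on an open piece (two policies may have identical mean and variance, hence coincident parabolas), so one must commit to a tie‑breaking rule and then check that the policy selected on the open interval is still optimal at the two closed endpoints — this is where continuity of $\tilde{\eta}^d$ together with the affine structure of the differences is used. A slightly more ``MDP‑flavoured'' route to the same conclusion is to write $\mathcal M(y)$ as the standard average‑cost linear program, note that $y$ enters only the objective vector and only affinely (the $\beta y^2$ term being a constant that can be dropped), and invoke parametric linear programming, by which the optimal basis — hence the optimal deterministic policy — is piecewise constant in $y$ with finitely many breakpoints. I would present the elementary lower‑envelope argument as the main proof and mention the LP viewpoint as a remark.
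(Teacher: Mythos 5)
Your proof is correct, but it takes a genuinely different route from the paper's. The paper proves this lemma by casting $\mathcal M(y)$ as the average-cost linear program, observing that the policy-independent $\beta y^2$ term can be dropped so that $y$ enters the objective only affinely, and then applying parametric-LP sensitivity analysis: for a fixed optimal basis the simplex optimality-test coefficients are affine in $y$, so each basis (hence each optimal deterministic policy) stays optimal on an interval $\{y : \bm\zeta + y\bm\zeta' \le \bm 0\}$ whose endpoints are computed from ratios of test coefficients. You instead work directly in the finite policy space: from the variance-distortion identity you get $\tilde{\eta}^d(y) = \eta^d + \beta(y-\mu^d)^2$, note that all these parabolas share the leading coefficient $\beta$ so pairwise differences are affine, and take the lower envelope, with breakpoints at the finitely many pairwise crossings; your handling of ties and of optimality at the closed endpoints via continuity is sound. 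What each approach buys: your argument is more elementary and self-contained (no LP machinery, no simplex optimality conditions), and it immediately yields the representation $\tilde{\eta}^*(y)=\min_k\{\eta^{\tilde d^*_k}+\beta(y-\mu^{\tilde d^*_k})^2\}$ that the paper only derives afterwards; the paper's PLP route is constructive in a way yours is not --- it characterizes the critical points through the test coefficients $\zeta(i,a),\zeta'(i,a)$, which the paper then identifies with advantage functions and uses to compute the $y^c_k$ sequentially, whereas your breakpoints are defined by crossings over all $O(|\mathcal D|^2)$ policy pairs and are not practically computable. Since you explicitly flag the LP viewpoint as an equivalent remark, the two arguments are complementary rather than in conflict.
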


\begin{proof}
We rewrite the standard MDP problem $\mathcal M(y)$ as a linear
programming (LP) model:
\begin{equation} \label{eq_LP}
\begin{array}{rcl}
\tilde{\eta}^*(y) &=& \min\limits_{\bm x} \left\{ \sum\limits_{i \in
S}\sum\limits_{a \in \mathcal A} [\beta(r(i,a) - y)^2 -
r(i,a)]x(i,a) \right\} \\
&\mbox{s.t., }& \sum\limits_{a \in \mathcal A}x(i,a) =
\sum\limits_{j \in S}\sum\limits_{a \in
\mathcal A} p(i|j,a) x(j,a), \qquad \forall i \in \mathcal S, \\
&&\sum\limits_{i \in S}\sum\limits_{a \in \mathcal A} x(i,a) = 1, \\
&&x(i,a) \geq 0, \qquad \forall i \in \mathcal S,  a \in \mathcal A.
\end{array}
\end{equation}
The above problem can be represented as a standard LP model:
\begin{equation}\label{eq_PLP}
\tilde{\eta}^*(y) - \beta y^2 = \min_{\bm x}\{(\bm c + y\bm c')^\T
\bm x | \bm A \bm x = \bm b, \bm x \geq \bm 0\},
\end{equation}
where we utilize the fact $\sum\limits_{i\in \mathcal S,a\in
\mathcal A} \beta y^2 x(i,a) = \beta y^2$, the $S$-by-$SA$ matrix
$\bm A$ and the $S$-dimensional column vector $\bm b$ are determined
by the constraint equations in \eqref{eq_LP}, $\bm c = \beta \bm
r^2_{\odot} - \bm r$, $\bm c' = -2\beta \bm r$, $\bm r$ and $\bm x$
are $SA$-dimensional column vector with element $r(i,a)$ and
$x(i,a)$, respectively. We observe that the right-hand-side of
\eqref{eq_PLP} is a \emph{parametric linear programming} (PLP)
\citep{Gal97,Tan11} with a linear parameter $y$. Below we do
sensitivity analysis for this PLP problem. For a given $y$, suppose
$\bm x^*_k$ is the optimal solution of \eqref{eq_PLP} and its
associated basis matrix is $\bm B$. We can verify that $\bm x_k^*$
in this LP is equivalent to the optimal policy $\tilde{d}^*_k$ of
the MDP $\mathcal M(y)$, where the optimal action in state $i$ is
$\tilde{d}^*_k(i) \in \argmax\limits_{a \in \mathcal
A}\{x^*_k(i,a)\}$, $i \in \mathcal S$. With the terminology of LP,
we denote $\bm A = [\bm B, \bm N]$, $\bm x = [\bm x_B; \bm x_N]$,
$\bm c = [\bm c_B; \bm c_N]$, and $\bm c' = [\bm c'_B; \bm c'_N]$.
The optimality test of the simplex method requires that all the test
coefficients should be nonpositive, i.e.,
\begin{equation*}
(\bm c_B + y \bm c'_B)^\T \bm B^{-1} \bm A - (\bm c + y \bm c')^\T =
(\bm c_B^\T \bm B^{-1} \bm A - \bm c^\T) + y ({\bm c'_B}^\T \bm
B^{-1} \bm A - {\bm c'}^\T ) \leq \bm 0.
\end{equation*}
For notation simplicity, we denote the $SA$-dimensional test
coefficients vector as
\begin{eqnarray}
\bm \zeta^\T &:=& \bm c_B^\T \bm B^{-1} \bm A - \bm c^\T, \label{eq_LPzeta1}\\
{\bm \zeta'}^\T &:=& {\bm c'_B}^\T \bm B^{-1} \bm A - {\bm c'}^\T.
\label{eq_LPzeta2}
\end{eqnarray}
In order to find the interval $[y^c_{k-1}, y^c_{k}]$ that any $y$
therein makes $\bm x^*_k$ remain optimal, we only need to solve $y$
satisfying $\bm \zeta + y \bm \zeta' \leq \bm 0$. It is easy to
verify that the solution is
\begin{eqnarray}
y^c_{k-1} = \max_{i\in \mathcal S, a\in \mathcal A}\left\{
-\frac{\zeta(i,a)}{\zeta'(i,a)}\Bigg| \zeta'(i,a)
< 0 \right\}  \qquad (\max\{\varnothing\} = -\infty), \label{eq_yk-1} \\
y^c_{k} = \min_{i\in \mathcal S, a\in \mathcal A}\left\{
-\frac{\zeta(i,a)}{\zeta'(i,a)}\Bigg| \zeta'(i,a) > 0 \right\}
\qquad (\min\{\varnothing\} = +\infty). \ \label{eq_yk}
\end{eqnarray}
Obviously, we can first let $y=\underline{r}$, and use
\eqref{eq_yk-1} and \eqref{eq_yk} to obtain $y^c_0 = -\infty$ and
$y^c_1$, respectively. Other $y^c_k$'s can be computed sequentially,
and $y^c_K = + \infty$. The lemma is proved.
\end{proof}

We call such $y^c_k$'s in Lemma~\ref{lemma2SA} \emph{critical
points} for the sensitivity analysis of MDP $\mathcal M(y)$, and
$K+1$ is the number of critical points. Actually, by using the
specific structures of $\bm A, \bm b, \bm r$ of the LP for $\mathcal
M(y)$, we can verify that
\begin{equation}
[\bm B^{-1}]^\T = (\bm I - \bm P^d + \bm e \bm e^\T)^{-1},
\end{equation}
which is a generalized \emph{fundamental matrix} in MDPs
\citep{Xia16}, where the policy $d$ corresponds to the vector of
feasible basic variables $\bm x_B$ of the basis matrix $\bm B$. The
associated vector $\bm b$ equals $\bm e$. The matrix $\bm A$ has a
similar structure of $\bm B$, i.e., $\bm A = \bm I_e - \bm P^\T_{e}
+ \bm e \bm e^\T$, where $\bm I_e$ is an $S$-by-$SA$ matrix whose
element $I_e(i,(j,a))=1 / 0$ if $i=j$/otherwise, $\bm P_e$ is an
$SA$-by-$S$ matrix whose element $P_e((i,a),j) = p(j|i,a)$, $\bm e
\bm e^\T$ is an $S$-by-$SA$ matrix of 1's. We also observe that $\bm
c_B$ is the column vector of the cost function of the MDP under
policy $d$ (associated with $\bm x_B$). We can derive that $\bm
c^\T_B \bm B^{-1}$ is equal to the \emph{performance potential} or
\emph{relative value function} in MDPs \citep{Cao07,Puterman94}.
Thus, we can verify that $\bm c_B^\T \bm B^{-1}$ in
\eqref{eq_LPzeta1} coincides with the \emph{Poisson equation} in
MDPs
\begin{eqnarray*}
\bm g^d &=& [\bm B^{-1}]^\T \bm c_B = (\bm I - \bm P^d + \bm e \bm
e^\T)^{-1} (\beta {\bm r^d}^2_{\odot} - \bm r^d), \\
{\bm g'}^d &=& [\bm B^{-1}]^\T \bm c'_B = -2 \beta (\bm I - \bm P^d
+ \bm e \bm e^\T)^{-1} \bm r^d,
\end{eqnarray*}
where $\bm g^d$ and ${\bm g'}^d$ are $S$-dimensional column vector
of performance potentials for the MDP under policy $d$ with cost
function $\beta {\bm r^d}^2_{\odot} - \bm r^d$ and $-2 \beta \bm
r^d$, respectively. Thus, we can rewrite the element of
\eqref{eq_LPzeta1} and \eqref{eq_LPzeta2} as
\begin{eqnarray}
\zeta(i,a)  &:=& g^d(i) - \sum_{j \in \mathcal S} p(j|i,a) g^d(j) + \sum_{j \in \mathcal S} g^d(j) + r(i,a) - \beta {r(i,a)}^2 =: -\tilde{A}^d(i,a) , \label{eq_MDPzeta1}\\
\zeta'(i,a) &:=& {g'}^d(i) - \sum_{j \in \mathcal S} p(j|i,a)
{g'}^d(j) + \sum_{j \in \mathcal S} {g'}^d(j) + 2\beta r(i,a) =:
-\tilde{A'}^d(i,a) , \label{eq_MDPzeta2}
\end{eqnarray}
where we utilize a fact that $\bm e^\T \bm g^d$ equals the long-run
average performance of the MDP under policy $d$, which can be
verified from the Poisson equation. Therefore, we can substitute
\eqref{eq_MDPzeta1} and \eqref{eq_MDPzeta2} into \eqref{eq_yk-1} and
\eqref{eq_yk} to compute all the critical points $y^c_k$'s.

\noindent\textbf{Remark~2.} Equations~\eqref{eq_MDPzeta1} and
\eqref{eq_MDPzeta2} interestingly indicate that the test coefficient
$\zeta(i,a)$ in LP coincides with the \emph{advantage function}
$\tilde{A}^d(i,a)$ which is a key quantity widely used in
reinforcement learning \citep{Sutton18}. $\tilde{A}^d(i,a) < 0$
means that action $a$ at state $i$ induces a smaller long-run
average cost than the current policy $d$ in MDPs, which hints that
$\zeta(i,a) > 0$ and the corresponding variable $x(i,a)$ should be
an entering basic variable in LP.

By using \eqref{eq_deltasigma}, we can obtain the relation between
the pseudo and real mean-variance combined performances as
\begin{equation*} \label{eq_y22}
\tilde{\eta}^{d}(y) = \beta[\sigma^{d} + (y-\mu^{d})^2] - \mu^{d} =
\eta^{d} +  \beta (y-\mu^{d})^2, \quad \forall d \in \mathcal D, y
\in \mathbb R.
\end{equation*}
Since the optimal policy of $\mathcal M(y)$ remains the same as
$\tilde{d}^*_k$ for any $y \in [y^c_{k-1}, y^c_{k}]$, we have
\begin{equation} \label{eq_y23}
\begin{array}{rcll}
\tilde{\eta}^*(y) &=& \eta^{\tilde{d}^*_k} +  \beta
(y-\mu^{\tilde{d}^*_k})^2, \qquad &\forall y \in [y^c_{k-1},
y^c_{k}], \\
\tilde{\eta}^*(y) &=&
\min\limits_{k=1,\dots,K}\left\{\eta^{\tilde{d}^*_k} + \beta
(y-\mu^{\tilde{d}^*_k})^2\right\}, \qquad &\forall y \in \mathbb R.
\end{array}
\end{equation}
That is, each piece of curves in Fig.~\ref{fig_etay} is a convex
quadratic function of $y$, and the whole curve is the minimum of all
these quadratic functions. With \eqref{eq_y23}, it is interesting to
note that all the piecewise curves have the same shape (the same
term of $\beta y^2$) but different positions. At each critical point
$y^c_k$, we can validate that both $\tilde{d}^*_k$ and
$\tilde{d}^*_{k+1}$ are optimal policies of MDP $\mathcal M(y^c_k)$,
so $\tilde{\eta}^*(y)$ is continuous in $y$. Therefore, we directly
derive the following lemma.
\begin{lemma}\label{lemma2}
The pseudo mean-variance performance $\tilde{\eta}^*(y)$ is a convex
piecewise quadratic function and continuous in $y$, and its global
minimum is the optimal solution of \eqref{eq_problem_origin}.
\end{lemma}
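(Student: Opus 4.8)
The plan is to obtain all three assertions by assembling facts already in hand: the piecewise structure of Lemma~\ref{lemma2SA}, the closed form~\eqref{eq_y23}, and the bilevel identity of Lemma~\ref{lemma1}. No new machinery is needed; the proof is essentially bookkeeping. Concretely, I would establish in turn: (a) $\tilde{\eta}^*$ is piecewise quadratic with each branch a \emph{convex} quadratic; (b) $\tilde{\eta}^*$ is continuous on $\mathbb{R}$; and (c) $\min_{y\in\mathbb{R}}\tilde{\eta}^*(y)=\eta^*$, with any minimizer $y^*$ yielding an optimal policy of~\eqref{eq_problem_origin}.

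For (a), Lemma~\ref{lemma2SA} supplies a finite family of intervals $[y^c_{k-1},y^c_k]$, $k=1,\dots,K$, covering $\mathbb{R}$, on the $k$-th of which the inner MDP $\mathcal{M}(y)$ has the fixed optimal policy $\tilde{d}^*_k$; then~\eqref{eq_y23} reads $\tilde{\eta}^*(y)=\eta^{\tilde{d}^*_k}+\beta\,(y-\mu^{\tilde{d}^*_k})^2$ on that interval, a quadratic in $y$ with leading coefficient $\beta\ge 0$, hence convex there. (I read ``convex piecewise quadratic'' as ``piecewise convex-quadratic'', consistent with the discussion preceding the lemma: each branch is an upward parabola of the same curvature $\beta$ but a different vertex $\mu^{\tilde{d}^*_k}$, so the slopes across breakpoints need not increase; global convexity is neither claimed here nor needed in the sequel.) For (b), the slickest route avoids any gluing analysis: since $\mathcal{S}$ and $\mathcal{A}$ are finite, $\mathcal{D}$ is finite, and $\tilde{\eta}^*(y)=\min_{d\in\mathcal{D}}\tilde{\eta}^d(y)=\min_{d\in\mathcal{D}}\{\eta^d+\beta(y-\mu^d)^2\}$ is the pointwise minimum of finitely many polynomials in $y$, hence continuous; equivalently, at each interior breakpoint $y^c_k$ both $\tilde{d}^*_k$ and $\tilde{d}^*_{k+1}$ attain $\tilde{\eta}^*(y^c_k)$, so the two adjacent parabola branches agree there.

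For (c), I would interchange the two minimizations: by~\eqref{eq_psuvarmin} the inner minimum $\min_{y\in\mathbb{R}}\{\eta^d+\beta(y-\mu^d)^2\}$ equals $\eta^d$ and is attained at $y=\mu^d$, whence $\min_{y}\tilde{\eta}^*(y)=\min_{y}\min_{d}\tilde{\eta}^d(y)=\min_{d}\eta^d=\eta^*$, which is precisely Lemma~\ref{lemma1}. Conversely, let $y^*$ be any global minimizer of $\tilde{\eta}^*$ and let $\tilde{d}^*:=\tilde{d}^*(y^*)$ solve $\mathcal{M}(y^*)$; then $\eta^{\tilde{d}^*}=\min_{y}\tilde{\eta}^{\tilde{d}^*}(y)\le\tilde{\eta}^{\tilde{d}^*}(y^*)=\tilde{\eta}^*(y^*)=\eta^*\le\eta^{\tilde{d}^*}$, forcing equality throughout; hence $\tilde{d}^*$ attains $\eta^*$ in~\eqref{eq_problem_origin}, and for $\beta>0$ necessarily $y^*=\mu^{\tilde{d}^*}$ since the parabola $\tilde{\eta}^{\tilde{d}^*}(\cdot)$ has the unique minimizer $\mu^{\tilde{d}^*}$.

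I do not expect a genuine obstacle: the statement is in effect a corollary packaging the sensitivity analysis already carried out. The only two spots needing mild care are the continuity at the critical points --- cleanly dispatched by the finite-minimum argument, which also sidesteps any case split over degenerate intervals or over policies that are optimal on several intervals --- and the interpretation of the word ``convex'' noted above.
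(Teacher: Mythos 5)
Your proposal is correct and follows essentially the same route as the paper, which likewise derives the lemma directly from Lemma~\ref{lemma2SA}, the identity $\tilde{\eta}^{d}(y)=\eta^{d}+\beta(y-\mu^{d})^2$, and the representation \eqref{eq_y23}, and which shares your reading of ``convex piecewise quadratic'' as each branch being an upward parabola (global convexity would contradict the multiple local optima in Fig.~\ref{fig_etay}). The only cosmetic difference is continuity: the paper glues adjacent branches by noting that both $\tilde{d}^*_k$ and $\tilde{d}^*_{k+1}$ are optimal at each critical point $y^c_k$, whereas you invoke the pointwise minimum of the finitely many parabolas $\{\eta^d+\beta(y-\mu^d)^2\}_{d\in\mathcal D}$ --- an equivalent and slightly cleaner step that you also note reduces to the paper's argument.
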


\begin{figure}[htbp]
\centering
\includegraphics[width=.65\columnwidth]{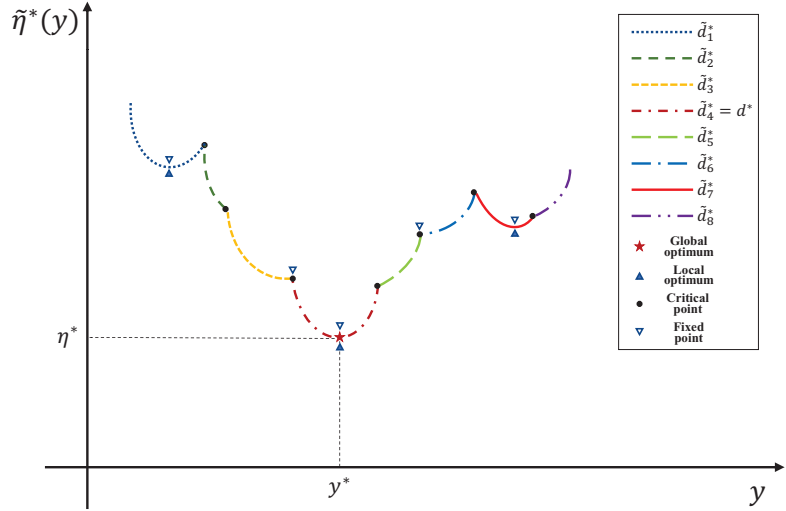}
\caption{Illustration of the convex piecewise quadratic curve of the
pseudo mean-variance performance $\tilde{\eta}^*(y)$ with respect to
the pseudo mean $y$.}\label{fig_etay}
\end{figure}

From Fig.~\ref{fig_etay}, we can observe that
$\min\limits_{y}\{\tilde{\eta}^*(y)\}$ is difficult to solve,
because $\tilde{\eta}^*(y)$ may have multiple local optima
$\hat{y}^*$ which has a zero derivative, i.e.,
\begin{equation*}
\frac{\partial \tilde{\eta}^*(y)}{\partial y}\Big|_{\hat{y}^*} = 2
\beta (\hat{y}^* - \mu^{\tilde{d}^*_k}) = 0, \qquad \mbox{if }
\hat{y}^* \in [y^c_{k-1}, y^c_k].
\end{equation*}
This indicates that a local optimum $\hat{y}^*$ must satisfy the
following \emph{fixed point equation}
\begin{equation}\label{eq_fixedpoint}
y = \mu^{\tilde{d}^*(y)}.
\end{equation}
Note that the fixed point solutions of \eqref{eq_fixedpoint}, as
indicated by ``inverted" triangles in Fig.~\ref{fig_etay}, are not
necessarily local optima of $\tilde{\eta}^*(y)$. The reason is when
a fixed point is also a critical point $y^c_k$, we can verify that
its left-derivative is 0 (or positive) and its right-derivative is
negative (or 0), and such point is not a local optimum $\hat{y}^*$
of $\tilde{\eta}^*(y)$, as illustrated by Fig.~\ref{fig_etay}. We
can verify that the policies indicated by all these fixed point
solutions of~\eqref{eq_fixedpoint} are exactly the so-called local
optimal policies in mixed or randomized policy space of MDPs, as
discovered by \cite{Xia20}. These two kinds of local optima are
different: local optima $\hat{y}^*$ in this paper are included by
local optimal policies (fixed point solutions) defined in
\cite{Xia20}, as illustrated by triangles and ``inverted" triangles
in Fig.~\ref{fig_etay}, respectively.

Note that $\tilde{d}^*$ is optimal only for the pseudo
problem~\eqref{eq_auxiMDP}, not for the original
problem~\eqref{eq_problem_origin}. Fortunately, we discover that
$\tilde{d}^*$ has a better mean-variance performance than some other
policies, which is described by the following lemma.

\begin{lemma}[Policy dominance]\label{lemma3}
For any $y \in \mathbb R$, $\tilde{d}^*$ is an optimal policy of the
MDP $\mathcal M(y)$ in \eqref{eq_auxiMDP}. If a policy $d \in
\mathcal D$ satisfying $\mu^d \in [y-|y-\mu^{\tilde{d}^*}|, \
y+|y-\mu^{\tilde{d}^*}|]$, then
\begin{equation}\label{eq_worseq}
\beta \sigma^{\tilde{d}^*} - \mu^{\tilde{d}^*} \leq \beta \sigma^d -
\mu^d.
\end{equation}
\end{lemma}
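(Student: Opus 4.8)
The plan is to derive \eqref{eq_worseq} directly from the optimality of $\tilde d^*$ for the standard MDP $\mathcal M(y)$, converting the pseudo-variance inequality into a real-variance inequality by means of the identity $\tilde\sigma^d(y)=\sigma^d+(y-\mu^d)^2$ established in \eqref{eq_deltasigma}. Since $\tilde d^*$ minimizes $\beta\tilde\sigma^d(y)-\mu^d$ over $d\in\mathcal D$, the hypothesized policy $d$ in particular satisfies
\[
\beta\tilde\sigma^{\tilde d^*}(y)-\mu^{\tilde d^*}\ \le\ \beta\tilde\sigma^d(y)-\mu^d .
\]
Substituting $\tilde\sigma^{\tilde d^*}(y)=\sigma^{\tilde d^*}+(y-\mu^{\tilde d^*})^2$ and $\tilde\sigma^{d}(y)=\sigma^{d}+(y-\mu^{d})^2$ and rearranging gives
\[
\beta\sigma^{\tilde d^*}-\mu^{\tilde d^*}\ \le\ \beta\sigma^d-\mu^d+\beta\big[(y-\mu^d)^2-(y-\mu^{\tilde d^*})^2\big].
\]

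The second step is to show the bracketed term is nonpositive. I would observe that the hypothesis $\mu^d\in[\,y-|y-\mu^{\tilde d^*}|,\ y+|y-\mu^{\tilde d^*}|\,]$ is exactly equivalent to $|y-\mu^d|\le|y-\mu^{\tilde d^*}|$, hence $(y-\mu^d)^2\le(y-\mu^{\tilde d^*})^2$; equivalently, in the notation of \eqref{eq_deltasigma}, the variance distortion satisfies $\Delta^{d}(y)\le\Delta^{\tilde d^*}(y)$. Since $\beta\ge 0$, the bracketed term is $\le 0$, and \eqref{eq_worseq} follows at once.

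There is essentially no analytic obstacle here; the only point that deserves care is the translation of the interval-membership condition into the inequality $(y-\mu^d)^2\le(y-\mu^{\tilde d^*})^2$ between squared deviations, and the observation that it is precisely this condition — with no slack — that makes $\tilde d^*$'s variance distortion at $y$ dominate that of $d$. Geometrically, the lemma says that every policy whose real mean lies in the symmetric interval around $y$ of radius $|y-\mu^{\tilde d^*}|$ is dominated by $\tilde d^*$ in mean-variance value, which is exactly the property that will later justify discarding such "worse" policies in the optimization algorithm.
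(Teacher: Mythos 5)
Your proposal is correct and follows essentially the same route as the paper's own proof: invoke the optimality of $\tilde d^*$ for $\mathcal M(y)$, substitute the identity $\tilde\sigma^d(y)=\sigma^d+(y-\mu^d)^2$ from \eqref{eq_deltasigma}, and use the equivalence of the interval condition with $(y-\mu^d)^2\le(y-\mu^{\tilde d^*})^2$ to cancel the distortion terms. No gaps; the argument matches the paper step for step.
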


\begin{proof}
Since $\tilde{d}^*$ is an optimal policy of the standard MDP problem
$\mathcal M(y)$, we have
\begin{equation}\label{eq9}
\beta \tilde{\sigma}^{\tilde{d}^*} (y) - \mu^{\tilde{d}^*}  \leq
\beta \tilde{\sigma}^d (y) - \mu^d, \quad \forall d \in \mathcal D.
\end{equation}
With \eqref{eq_deltasigma}, we derive
\begin{equation}
\tilde{\sigma}^d (y) = \sigma^d + (y - \mu^d)^2, \quad \forall d \in
\mathcal D. \nonumber
\end{equation}
Substituting the above equation into \eqref{eq9}, we have
\begin{equation}\label{eq10}
\beta \sigma^{\tilde{d}^*}  - \mu^{\tilde{d}^*} + \beta (y -
\mu^{\tilde{d}^*})^2 \leq \beta \sigma^d  - \mu^d + \beta (y -
\mu^d)^2, \quad \forall d \in \mathcal D.
\end{equation}
For any policy $d$ satisfying $\mu^d \in [y-|y-\mu^{\tilde{d}^*}|, \
y+|y-\mu^{\tilde{d}^*}|]$, we have
\begin{equation}\label{eq12ineq}
(y - \mu^{\tilde{d}^*})^2 \geq (y - \mu^d)^2.
\end{equation}
Substituting \eqref{eq12ineq} into \eqref{eq10}, we directly obtain
\eqref{eq_worseq}, and the lemma is proved.
\end{proof}

Moreover, if $d$ satisfies $\mu^d \in (y-|y-\mu^{\tilde{d}^*}|, \
y+|y-\mu^{\tilde{d}^*}|)$, we have $(y - \mu^{\tilde{d}^*})^2 > (y -
\mu^d)^2$, and the inequality in \eqref{eq_worseq} strictly holds.
Therefore, Lemma~\ref{lemma3} indicates that $\tilde{d}^*$ dominates
all the policies whose means lie in the interval
$[y-|y-\mu^{\tilde{d}^*}|, \ y+|y-\mu^{\tilde{d}^*}|]$, and these
dominated policies can be removed from the policy space $\mathcal D$
to save computation. We illustrate this property by
Fig.~\ref{fig_dominatedarea}, where we can see that the shadow area
can be discarded since the policies therein are always dominated by
$\tilde{d}^*$. Thus, we can utilize Lemma~\ref{lemma3} to
significantly reduce the complexity of the mean-variance problem
\eqref{eq_problem_origin}.

\begin{figure}[htbp]
\centering
\includegraphics[width=.65\columnwidth]{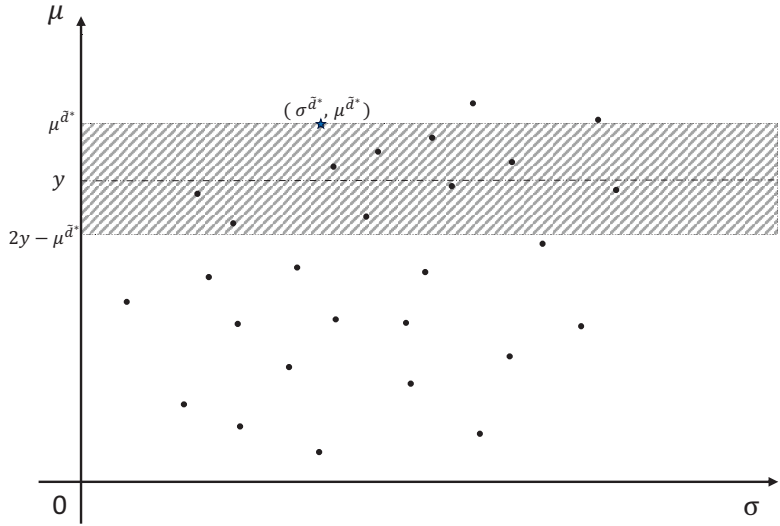}
\caption{Illustration of the dominated policy area indicated by
Lemma~\ref{lemma3}.}\label{fig_dominatedarea}
\end{figure}

With Lemma~\ref{lemma3}, we can develop an algorithm to iteratively
solve the bilevel MDP problem~\eqref{eq_bilevel}, which is described
by Algorithm~\ref{algo1}. The key idea is to solve a series of
auxiliary problems $\mathcal M(y)$'s, and repeatedly reduce the
auxiliary variable $y$'s value domain $\mathbb Y$ by using
Lemma~\ref{lemma3}. When $\mathbb Y$ is shrunk to be empty, the
best-so-far solution of $\mathcal M(y)$'s is the global optimum of
the bilevel MDP problem \eqref{eq_bilevel} or \eqref{eq_bilevel2}.
The global convergence of Algorithm~\ref{algo1} is guaranteed by the
following theorem.

\begin{algorithm}[htbp]
 \caption{An iterative algorithm to find globally optimal policies of mean-variance MDPs.}\label{algo1}
\begin{algorithmic}[1]

\State \textbf{Input:} MDP parameters $\langle \mathcal S, \mathcal
A, \mathcal P, \bm r \rangle$ and coefficient $\beta$.

\State \textbf{Output:} The globally optimal policy $d^*$ and its
optimum $\eta^*$.

\State \hspace{0cm} \emph{initialization}: set $\mathbb Y =
[\underline{r}, \overline{r}]$, $l = 0$, and $\eta^* = +\infty$.

\While {$\mathbb Y \neq \varnothing$}

\State \hspace{-0.6cm} \emph{set pseudo mean}: $\mathbb Y$ is
composed of multiple intervals, descendingly sort these intervals as
$\mathbb Y = \{\mathbb Y_1, \mathbb Y_2, \dots\}$, set $y_l$ as the
median value of the first interval of $\mathbb Y$, i.e., $y_l =
(\max\{\mathbb Y_1\} + \min\{\mathbb Y_1\})/2$.

\State \hspace{-0.6cm} \emph{solve auxiliary problem}: solve the
standard MDP $\mathcal M(y_l)$ by using traditional dynamic
programming methods, such as policy iteration or value iteration,
and obtain an optimal policy $\tilde{d}^*(y_l)$ of $\mathcal
M(y_l)$.

\State \hspace{-0.6cm} \emph{domain shrinking}: use
Lemma~\ref{lemma3} to remove the interval
$[y_l-|y_l-\mu^{\tilde{d}^*(y_l)}|, \
y_l+|y_l-\mu^{\tilde{d}^*(y_l)}|]$ from the value domain $\mathbb
Y$, i.e., $\mathbb Y = \mathbb Y -
[y_l-|y_l-\mu^{\tilde{d}^*(y_l)}|, \
y_l+|y_l-\mu^{\tilde{d}^*(y_l)}|]$.

\State \hspace{-0.6cm} \emph{update parameters}:

\If{$\eta^* > \beta \sigma^{\tilde{d}^*(y_l)} -
\mu^{\tilde{d}^*(y_l)}$}

\State $\eta^* = \beta \sigma^{\tilde{d}^*(y_l)} -
\mu^{\tilde{d}^*(y_l)}$.

\State $d^* = \tilde{d}^*(y_l)$.

\EndIf

\State set $l := l+1$.

\EndWhile

\Return $d^*$ and $\eta^*$.

\end{algorithmic}
\end{algorithm}

\begin{theorem}\label{theorem1}
Algorithm~\ref{algo1} converges to the global optimum of the
mean-variance optimization problem after a finite number of
iterations.
\end{theorem}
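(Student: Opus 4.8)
The plan is to prove Theorem~\ref{theorem1} in two halves: (a) \emph{correctness} --- that when the loop of Algorithm~\ref{algo1} exits, the returned pair $(d^*,\eta^*)$ is a globally optimal policy and value of (P0); and (b) \emph{finiteness} --- that the loop exits after a finite number of iterations. I expect (a) to be short once Lemma~\ref{lemma3} is available, and (b) to be the real work.

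For (a) I would reason as follows. Write $\mathbb Y^{(l)}$ for the value domain at the start of iteration $l$; at that iteration the algorithm deletes the interval $R_l:=[\,y_l-|y_l-\mu^{\tilde d^*(y_l)}|,\ y_l+|y_l-\mu^{\tilde d^*(y_l)}|\,]$. By Lemma~\ref{lemma3}, every policy whose long-run mean lies in $R_l$ has mean--variance value at least $\beta\sigma^{\tilde d^*(y_l)}-\mu^{\tilde d^*(y_l)}$, and this value has just been tested against the incumbent in the parameter-update step, so the deletion never discards a policy strictly better than the current incumbent. Since $\mathbb Y$ starts as $[\underline r,\overline r]\supseteq\mathcal Y$ and only loses such intervals, exiting with $\mathbb Y=\varnothing$ forces $[\underline r,\overline r]=\bigcup_l\big(R_l\cap\mathbb Y^{(l)}\big)$. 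Hence the mean $\mu^{d_0}$ of any globally optimal policy $d_0$ of (P0) lies in some $R_{l_0}$, and Lemma~\ref{lemma3} applied at iteration $l_0$ gives $\beta\sigma^{\tilde d^*(y_{l_0})}-\mu^{\tilde d^*(y_{l_0})}\le\beta\sigma^{d_0}-\mu^{d_0}=\eta^*$; so the incumbent after iteration $l_0$ is $\le\eta^*$, and since the incumbent is non-increasing it is still $\le\eta^*$ at exit. Being the genuine mean--variance value of an explicit policy, the incumbent is also $\ge\eta^*$, so it equals $\eta^*$ and the returned $d^*$ attains it.

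For (b), the difficulty is that the deleted radius $r_l:=|y_l-\mu^{\tilde d^*(y_l)}|$ has no uniform positive lower bound --- it is $0$ exactly when $y_l$ is a fixed point $\mu^{\tilde d^*(y_l)}$, and can be arbitrarily small near one --- so the naive ``each step removes length $\ge\delta$'' argument fails. Instead I would fix a policy $d\in\mathcal D$ and bound the number of iterations with $\tilde d^*(y_l)=d$; summing over the at most $A^{S}$ policies then bounds the total. List those iterations as $l_1<l_2<\cdots$. At step $l_j$ the set $R_{l_j}\cap\mathbb Y^{(l_j)}$ is removed and $\mathbb Y$ only shrinks, so any later $y_{l_{i+1}}\in\mathbb Y^{(l_{i+1})}\subseteq\mathbb Y^{(l_j)}$ must avoid $R_{l_j}$; thus $y_{l_{i+1}}\notin U_i:=\bigcup_{j\le i}R_{l_j}$. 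Because each $R_{l_j}$ has the common point $\mu^d$ as one endpoint, $U_i=[p_i,q_i]$ is an interval with $p_i\le\mu^d\le q_i$ that strictly enlarges at each step. If $y_{l_{i+1}}>q_i$ then $R_{l_{i+1}}=[\,\mu^d,\ 2y_{l_{i+1}}-\mu^d\,]$, so the right half-width $q_i-\mu^d$ jumps to $2(y_{l_{i+1}}-\mu^d)>2(q_i-\mu^d)$; symmetrically, if $y_{l_{i+1}}<p_i$ the left half-width more than doubles. Each occurrence of $d$ thus enlarges $U_i$ on exactly one side, that side's half-width at least doubles at every subsequent enlargement on it, neither half-width can exceed $\overline r-\underline r$, and $r_l=0$ can happen at most once for $d$ (it deletes $\mu^d$ itself). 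Hence $d$ occurs finitely often, and the algorithm halts.

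The main obstacle is precisely this finiteness step, and the key to it is the geometry of $R_l$: symmetric about the bisection point $y_l$ and anchored at $\mu^{\tilde d^*(y_l)}\in\mathcal Y$, which converts ``no lower bound on $r_l$'' into geometric growth of the per-policy removed interval $U_i$. Two minor points I would also record: the set-pseudo-mean step keeps $\mathbb Y$ a finite union of intervals throughout (deleting one interval from a union of $n$ leaves at most $n+1$), so every step is well defined; and Lemma~\ref{lemma2SA} underlies the whole construction, since it is what guarantees $\tilde d^*(y)$ takes only finitely many values on $[\underline r,\overline r]$. Finally, the argument is insensitive to the presence of the linear term $-\mu^d$ and so applies equally to pure variance minimization.
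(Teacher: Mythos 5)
Your proposal is correct, and its first half (correctness of the returned incumbent once $\mathbb Y=\varnothing$) is essentially the paper's own closing argument, made more explicit. The finiteness half, however, takes a genuinely different route. The paper's proof leans entirely on the fact that $y_l$ is the \emph{median} of the first remaining interval $\mathbb Y_1$: each iteration either removes a policy mean $\mu^{\tilde d^*(y_l)}$ that still lies in $\mathbb Y$ (this can happen at most $|\mathcal D|$ times), or, if $\mu^{\tilde d^*(y_l)}$ was already removed, the radius $|y_l-\mu^{\tilde d^*(y_l)}|$ is at least half the length of $\mathbb Y_1$, so the symmetric deleted interval swallows all of $\mathbb Y_1$ (this can happen at most $|\mathcal D|+1$ times, since each policy-deletion splits at most one interval). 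That dichotomy yields the explicit uniform bound $2|\mathcal D|+1$, which is exactly what the subsequent Corollary records. Your per-policy argument never invokes the median rule --- it only uses that $y_l$ is drawn from the not-yet-removed set and that every deleted interval for a fixed policy $d$ is anchored at $\mu^d$, forcing the one-sided half-widths of $\bigcup_j R_{l_j}$ to at least double at each recurrence of $d$. This buys generality (convergence holds for \emph{any} selection rule $y_l\in\mathbb Y$, not just bisection of $\mathbb Y_1$) at the cost of uniformity: the number of recurrences of $d$ depends on the first positive half-width generated on each side, for which there is no a priori lower bound, so your proof establishes finite termination but would not by itself deliver the paper's $2|\mathcal D|+1$ complexity bound. (Two trivia: the half-widths are bounded by $2(\overline r-\underline r)$ rather than $\overline r-\underline r$, which changes nothing; and finiteness of the range of $\tilde d^*(\cdot)$ follows already from $|\mathcal D|<\infty$, so Lemma~\ref{lemma2SA} is not actually needed for your argument.)
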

\begin{proof}
To prove the convergence of Algorithm~\ref{algo1}, we only need to
prove that the value domain $\mathbb Y$ is reduced to an empty set
after a finite number of iterations. From the algorithm procedure,
we can observe that for each iteration of solving an auxiliary
problem $\mathcal M(y_l)$, we will derive a policy
$\tilde{d}^*(y_l)$ and remove a square area with y-axis interval
$[y_l - |y_l-\mu^{\tilde{d}^*(y_l)}|, \ y_l + |y_l -
\mu^{\tilde{d}^*(y_l)}|]$, as stated by Lemma~\ref{lemma3}. This
removed area at least contains the policy $\tilde{d}^*(y_l)$, as
illustrated by the 1st and 2nd iterations in
Fig.~\ref{fig_shrinkingproc}. Usually, it contains multiple policies
dominated by the policy $\tilde{d}^*(y_l)$, as illustrated by
Fig.~\ref{fig_dominatedarea}. If the current policy
$\tilde{d}^*(y_l)$ has already been removed by previous domain
shrinking operations, the current domain shrinking operation will
remove at least the interval $\mathbb Y_1$, as illustrated by the
3rd, 4th, and 5th iterations in Fig.~\ref{fig_shrinkingproc}, which
can be verified by the fact of $y_l$ being the median value of
$\mathbb Y_1$ and Lemma~\ref{lemma3}. In summary, each domain
shrinking operation will either delete at least one policy (not
deleted previously) or delete at least one interval $\mathbb Y_1$.
It is easy to verify that the largest number of intervals is
$|\mathcal D|+1$, where each iteration only deletes a very small
area around $\tilde{d}^*(y_l)$. Therefore, in the worst case, we
need $|\mathcal D|$ iterations to delete every policy and $|\mathcal
D|+1$ iterations to delete every interval. The algorithm will stop
after at most $2|\mathcal D|+1$ iterations.

Since each $\tilde{d}^*(y_l)$ dominates all the other policies
located in the shrunk area of the $l$-th iteration, the best-so-far
solution among all $\tilde{d}^*(y_l)$'s is the global optimum of the
original mean-variance optimization problem. This completes the
proof.
\end{proof}

\begin{figure}[htbp]
\centering
\includegraphics[width=1.0\columnwidth]{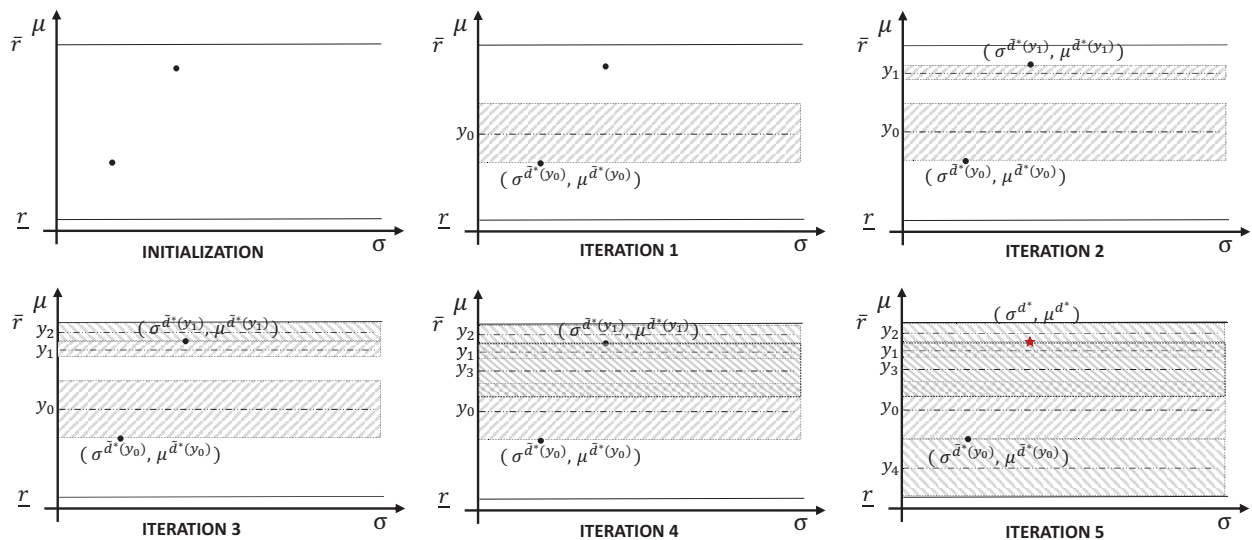}
\caption{A worst-case illustration of domain shrinking procedure of
Algorithm~\ref{algo1} for an example of a policy space with only two
solutions.}\label{fig_shrinkingproc}
\end{figure}

Fig.~\ref{fig_shrinkingproc} gives an illustration of the worst case
for an example of a policy space with only 2 solutions, it requires
$2\times2+1=5$ iterations to cover the whole interval
$[\underline{r}, \overline{r}]$. From the proof of
Theorem~\ref{theorem1}, we directly derive the following corollary
about the computational complexity of Algorithm~\ref{algo1}.

\begin{corollary}
The computational complexity of Algorithm~\ref{algo1} is $2|\mathcal
D|+1$ times of solving $\mathcal M(y)$.
\end{corollary}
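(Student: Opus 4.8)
The plan is to read the bound off directly from the proof of Theorem~\ref{theorem1}, since the corollary is essentially a restatement of the worst-case iteration count in terms of computational effort. The key observation is that each pass through the \textbf{while} loop of Algorithm~\ref{algo1} invokes the ``solve auxiliary problem'' step exactly once, i.e., it solves a single standard MDP $\mathcal M(y_l)$; all remaining per-iteration work---sorting the intervals of $\mathbb Y$, forming the median $y_l$, evaluating $\mu^{\tilde{d}^*(y_l)}$ and $\sigma^{\tilde{d}^*(y_l)}$, performing the interval subtraction in the domain-shrinking step, and updating $(\eta^*, d^*)$---is elementary and dominated by the cost of solving an MDP. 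Hence the total computational effort equals (number of iterations) $\times$ (cost of one solve of $\mathcal M(y)$), measured in the natural unit of ``one solve of $\mathcal M(y)$.''

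It then remains to bound the iteration count, and this is exactly what the proof of Theorem~\ref{theorem1} supplies: each domain-shrinking step either deletes at least one previously-undeleted policy of $\mathcal D$ (when $\tilde{d}^*(y_l)$ is new) or deletes at least one whole interval $\mathbb Y_1$ of $\mathbb Y$ (when $\tilde{d}^*(y_l)$ has already been removed). Since $\mathbb Y$ decomposes into at most $|\mathcal D|+1$ disjoint intervals and there are only $|\mathcal D|$ policies to exhaust, the loop terminates after at most $|\mathcal D| + (|\mathcal D|+1) = 2|\mathcal D|+1$ iterations. Multiplying by the per-iteration cost yields the claimed complexity of $2|\mathcal D|+1$ solves of $\mathcal M(y)$.

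There is no genuinely hard step here; the main (and only) point worth making explicit is the modeling convention of counting complexity in units of ``solving one $\mathcal M(y)$.'' This is legitimate because $\mathcal M(y)$ is a standard undiscounted unichain MDP, solvable by policy or value iteration, and this is the dominant operation in each iteration, so it absorbs all the auxiliary bookkeeping. With that convention fixed, the corollary follows immediately from Theorem~\ref{theorem1}.
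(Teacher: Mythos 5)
Your proposal is correct and follows exactly the paper's route: the paper derives this corollary directly from the proof of Theorem~\ref{theorem1}, which bounds the iteration count by $|\mathcal D|$ policy-deleting steps plus $|\mathcal D|+1$ interval-deleting steps, with each iteration costing one solve of $\mathcal M(y)$. Your explicit remark about measuring complexity in units of one MDP solve is a reasonable clarification of the paper's implicit convention, but the argument is the same.
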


Although the above computational complexity is not attractive, it
accounts for the worst case. Numerical experiments in
Section~\ref{section_experiment} demonstrate that the convergence
speed of Algorithm~\ref{algo1} is very fast in most cases.

\noindent\textbf{Remark~3.} By changing the update rule of $y_l$ in
Algorithm~\ref{algo1}, we can obtain different version of
algorithms. One example is to let $y_{l+1} =
\mu^{\tilde{d}^*(y_{l})}$, i.e., the pseudo mean $y_{l+1}$ is set as
the real mean of the optimal policy $\tilde{d}^*(y_{l})$ of
$\mathcal M(y_l)$. Such revised algorithm is very similar to the
policy iteration algorithm for solving local optimality equation in
\cite{Xia20}, both converge to a fixed point solution to
\eqref{eq_fixedpoint}.

Besides Lemma~\ref{lemma3}, we may further improve the shrinking
efficiency of dominated areas by using other properties. From the
viewpoint of bi-objective optimization in Fig.~\ref{fig_pareto}, we
observe that the minimization of objective $\beta \sigma^{d} -
\mu^{d}$ is interpreted to find the last solution $d^*$ tangent with
the line of slope $\beta$ when the line is moving toward top-left.
All the solutions located at the down-right side of this line have a
worse objective $\beta \sigma^{d} - \mu^{d}$ than that of the
solution $d^*$. This fact is illustrated by
Fig.~\ref{fig_dominatedarea2}.

\begin{figure}[htbp]
\centering
\includegraphics[width=.6\columnwidth]{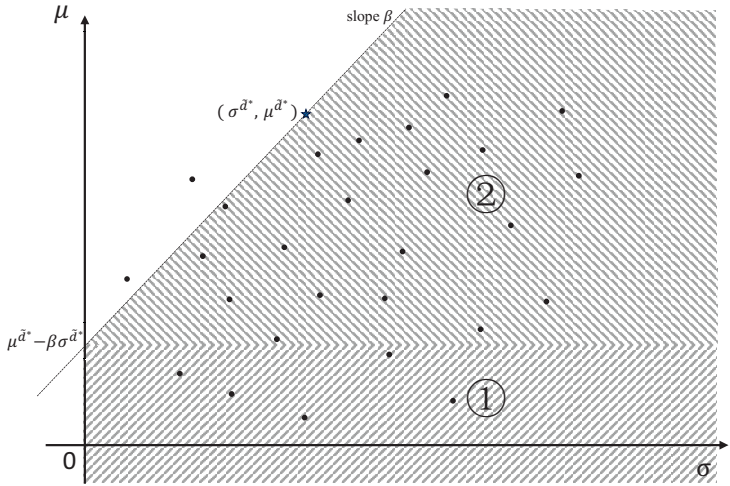}
\caption{Illustration of the dominated policy area indicated by
Lemma~\ref{lemma4}.}\label{fig_dominatedarea2}
\end{figure}

Therefore, based on an optimal policy $\tilde{d}^*$ by solving the
pseudo mean-variance MDP $\mathcal M(y)$, we directly derive the
following lemma about the shrinkage of dominated areas.
\begin{lemma}\label{lemma4}
For any policy $\tilde{d}^* \in \mathcal D$, the policies in the
following areas are dominated by $\tilde{d}^*$ and can be
discarded: \\
\textcircled{1} any policy $d \in \mathcal D$ satisfying $\mu^d \in (-\infty, \ \mu^{\tilde{d}^*} - \beta \sigma^{\tilde{d}^*}]$;  \\
\textcircled{2} any policy $d \in \mathcal D$ satisfying $\mu^d \in
[\mu^{\tilde{d}^*} - \beta \sigma^{\tilde{d}^*}, \ +\infty)$ and
$\sigma^d \in [\sigma^{\tilde{d}^*}+\frac{1}{\beta}(\mu^d -
\mu^{\tilde{d}^*}), \ +\infty)$.
\end{lemma}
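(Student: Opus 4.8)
The plan is to reduce the claim, in both cases \textcircled{1} and \textcircled{2}, to the single inequality $\eta^{\tilde d^*}\le\eta^d$, i.e. $\beta\sigma^{\tilde d^*}-\mu^{\tilde d^*}\le\beta\sigma^d-\mu^d$, which is exactly what ``$\tilde d^*$ dominates $d$'' means by the definition $\eta^d:=\beta\sigma^d-\mu^d$. Geometrically, as Fig.~\ref{fig_dominatedarea2} suggests, this inequality says that the point $(\sigma^d,\mu^d)$ lies on or below the straight line of slope $\beta$ through $(\sigma^{\tilde d^*},\mu^{\tilde d^*})$ in the $(\sigma,\mu)$-plane; equivalently $\mu^d-\beta\sigma^d\le\mu^{\tilde d^*}-\beta\sigma^{\tilde d^*}$. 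Thus the dominated region is the half-plane $\{(\sigma,\mu):\mu-\beta\sigma\le\mu^{\tilde d^*}-\beta\sigma^{\tilde d^*}\}$, and \textcircled{1}--\textcircled{2} are just a convenient partition of (the feasible part of) this half-plane into the slab $\mu^d\le\mu^{\tilde d^*}-\beta\sigma^{\tilde d^*}$ and its complement.

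For \textcircled{1} I would use only the elementary fact that the steady-state variance is nonnegative, $\sigma^d\ge0$ (immediate from $\sigma^d=\bm\pi^d(\bm r^d-\mu^d\bm e)^2_\odot$). Then $\beta\sigma^d-\mu^d\ge-\mu^d\ge-(\mu^{\tilde d^*}-\beta\sigma^{\tilde d^*})=\beta\sigma^{\tilde d^*}-\mu^{\tilde d^*}$, where the middle step is exactly the hypothesis $\mu^d\le\mu^{\tilde d^*}-\beta\sigma^{\tilde d^*}$; this gives $\eta^{\tilde d^*}\le\eta^d$. For \textcircled{2} I would start from the hypothesis $\sigma^d\ge\sigma^{\tilde d^*}+\frac1\beta(\mu^d-\mu^{\tilde d^*})$ and multiply through by $\beta>0$ (the assumption $\beta>0$ is implicit here, since $\frac1\beta$ appears in the statement), obtaining $\beta\sigma^d\ge\beta\sigma^{\tilde d^*}+\mu^d-\mu^{\tilde d^*}$, i.e. $\beta\sigma^d-\mu^d\ge\beta\sigma^{\tilde d^*}-\mu^{\tilde d^*}$, which is again $\eta^{\tilde d^*}\le\eta^d$. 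I would also remark that the auxiliary condition $\mu^d\ge\mu^{\tilde d^*}-\beta\sigma^{\tilde d^*}$ appearing in \textcircled{2} is not actually used in this derivation: it is there only so that \textcircled{1} and \textcircled{2} describe complementary ranges of $\mu^d$, because for $\mu^d<\mu^{\tilde d^*}-\beta\sigma^{\tilde d^*}$ the variance bound in \textcircled{2} has a negative right-hand side and is automatically satisfied, so that subcase is already subsumed by \textcircled{1}.

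There is essentially no hard step here: the lemma is an immediate algebraic rearrangement of the definition of $\eta^d$ together with $\sigma^d\ge0$ and $\beta>0$. The only thing requiring care is bookkeeping — preserving the direction of each inequality when multiplying by $\beta$, and correctly translating between the $(\sigma,\mu)$-region description and the $\mu^d$-interval description that the ``domain shrinking'' step of Algorithm~\ref{algo1} actually consumes. I would close by noting, consistently with the discussion preceding the lemma, that \textcircled{1} discards a genuine interval $(-\infty,\,\mu^{\tilde d^*}-\beta\sigma^{\tilde d^*}]$ of pseudo-mean values and can therefore be applied exactly like the shrinking step of Algorithm~\ref{algo1}, whereas \textcircled{2} prunes an additional region of the $(\sigma,\mu)$-plane (equivalently, of $\mathcal D$) that lies to the lower-right of the slope-$\beta$ line through $(\sigma^{\tilde d^*},\mu^{\tilde d^*})$.
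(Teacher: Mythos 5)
Your proof is correct and follows essentially the same route as the paper, which justifies Lemma~\ref{lemma4} only by the geometric observation that every policy on the down-right side of the slope-$\beta$ line through $(\sigma^{\tilde d^*},\mu^{\tilde d^*})$ has a larger $\beta\sigma^d-\mu^d$; your algebraic reduction of \textcircled{1} (via $\sigma^d\ge 0$) and \textcircled{2} (via multiplying the variance bound by $\beta>0$) to the single inequality $\eta^{\tilde d^*}\le\eta^d$ is exactly the formalization of that picture. Your side remarks --- that the condition $\mu^d\ge\mu^{\tilde d^*}-\beta\sigma^{\tilde d^*}$ in \textcircled{2} is redundant for the inequality itself and that $\beta>0$ is implicitly required there --- are both accurate.
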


The area~\textcircled{1} in Lemma~\ref{lemma4} is similar to the
area discarded by Lemma~\ref{lemma3}, both are square areas and have
no constraints on variances. Therefore, we can utilize the
rule~\textcircled{1} in Lemma~\ref{lemma4} to speed up the domain
shrinking of $\mathbb Y$ in Algorithm~\ref{algo1}. That is, at
line~7 of Algorithm~\ref{algo1}, we can add an extra operation to
discard the area~\textcircled{1} indicated by Lemma~\ref{lemma4}:
\begin{equation}\label{eq_Ydeleting}
\mathbb Y = \mathbb Y - (-\infty, \ \mu^{\tilde{d}^*(y_l)} - \beta
\sigma^{\tilde{d}^*(y_l)}].
\end{equation}
We call such algorithm revision \emph{Algorithm~\ref{algo1}-Plus},
whose performance is compared in our numerical experiments in
Section~\ref{section_experiment}. For example, in
Fig.~\ref{fig_shrinkingproc}, when $\beta$ is relatively small, the
Iteration~5 will be saved if we apply \eqref{eq_Ydeleting} for
policy $\tilde{d}^*(y_1)$ at the Iteration~2. This demonstrates that
Algorithm~\ref{algo1}-Plus is computationally saving compared with
Algorithm~\ref{algo1}.

\noindent\textbf{Remark~4.} It is easy to verify that all the
results in this paper can be extended to solely minimizing the
steady-state variance of MDPs. One trivial method is to let the
coefficient $\beta$ in \eqref{eq_problem_origin} large enough to
approximate the variance minimization of MDPs. Actually, if we
replace the mean-variance objective $\beta \sigma^d - \mu^d$ in
\eqref{eq_problem_origin} with the variance $\sigma^d$, we can
rigorously prove that all the previous results hold for this
variance minimization problem. Algorithm~\ref{algo1} also works to
find the optimal policy that attains the global minimum of the
variance in MDPs.

\section{Numerical Experiments}\label{section_experiment}
In this section, we validate the proposed algorithms with a
multi-period inventory control problem, where we consider both the
steady-state mean and variance of rewards. This problem is modeled
as an infinite-horizon discrete-time undiscounted MDP. The inventory
capacity is $C \in \mathbb{N}^+$. At each epoch $ t = 0, 1, \dots $,
the inventory level $ s_t \in \mathcal S = \{ 0, 1, \dots, C \} $ is
reviewed and an order $ a_t \in \mathcal A(s_t) = \{ 0, \dots, C -
s_t \} $ is made. The demands $\xi_t \sim B(C, p)$ are independent
and identically distributed, where $B(C, p)$ is a binomial
distribution and $ p $ is the probability of success. There is no
lead time and the next inventory level is determined as $s_{t+1} =
[s_t + a_t - \xi_t]^+$. The reward function is $r(s_t, a_t) =
\mathbb E[r(s_{t+1}|s_t,a_t)] = -\mathbb E\{b a_t + h s_{t+1} + l
[\xi_t - s_t - a_t]^+ \}$, where $ b,h $ and $ l $ are ordering,
holding and shortage costs per unit, respectively. By default, we
set $C = 4, p = 0.6, b = 1,  h = 0.7, l = 2.9,$ and $\beta = 10$. We
run algorithms 50 replications for statistical analysis.

Fig.~\ref{fig_convProc} illustrates an example of the convergence
process of Algorithm~\ref{algo1}, where the interval
$[\underline{r}, \overline{r}]$ is covered iteratively and the
global optimum is found after only 6 iterations. This demonstrates
the efficiency of Algorithm~\ref{algo1}, although the policy space
is large as $|\mathcal D| = (C+1)!$. 

\begin{figure}[htbp]
    \centering
    \begin{minipage}{0.49\linewidth}
        \centering
        \includegraphics[width=1\linewidth]{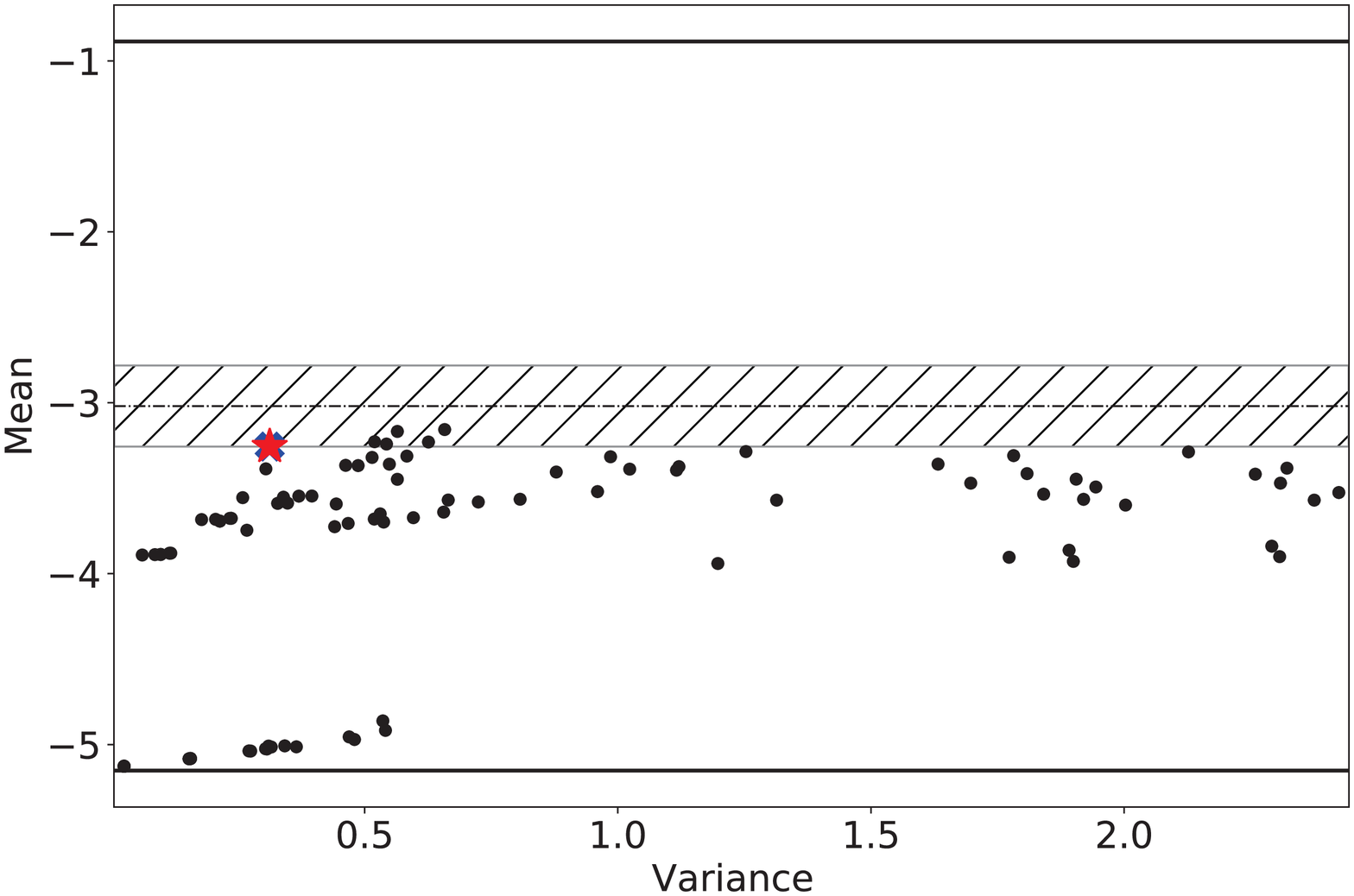}
    \end{minipage}
    \begin{minipage}{0.49\linewidth}
        \centering
        \includegraphics[width=1\linewidth]{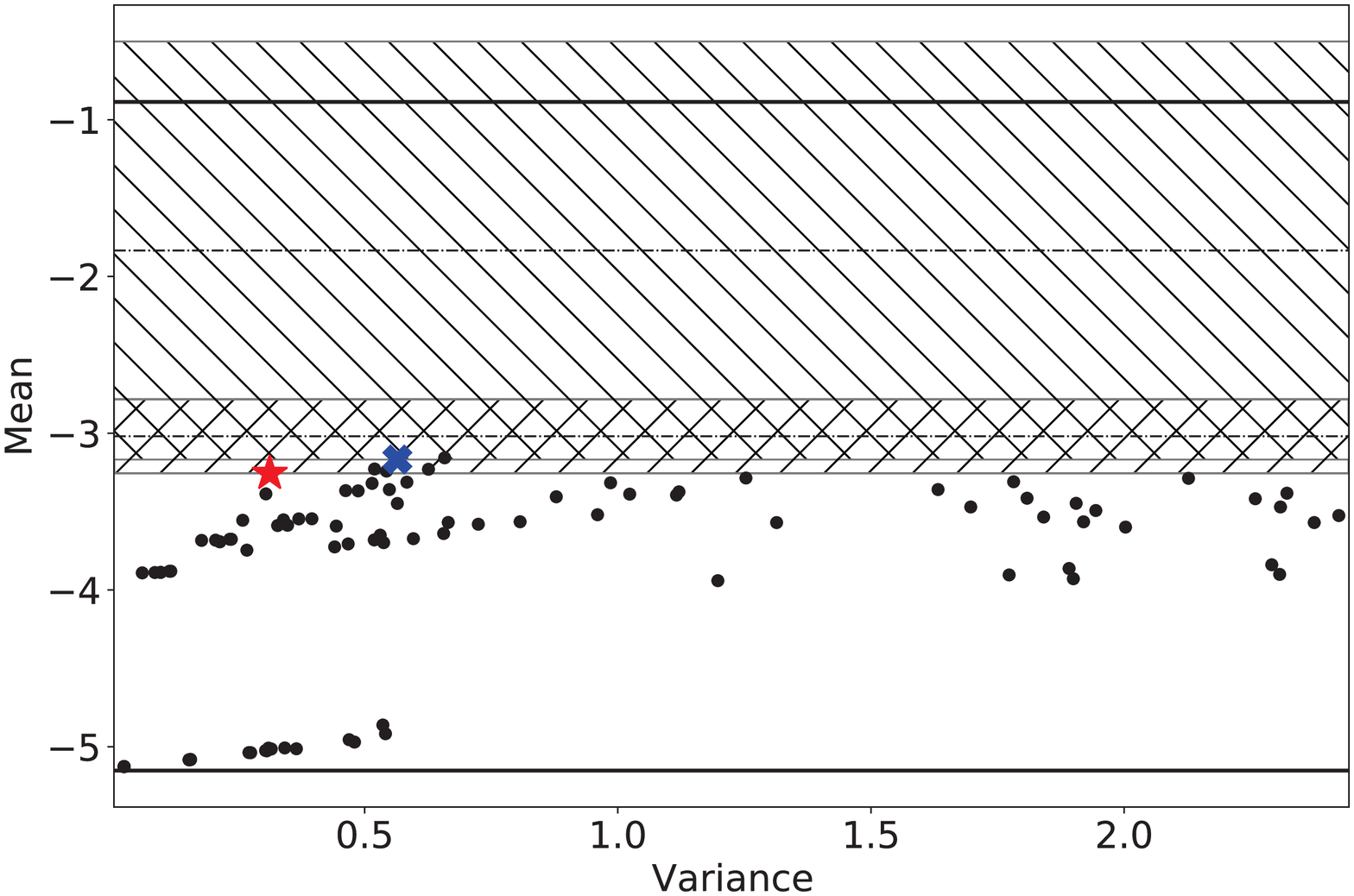}
    \end{minipage}

    \begin{minipage}{0.49\linewidth}
        \centering
        \includegraphics[width=1\linewidth]{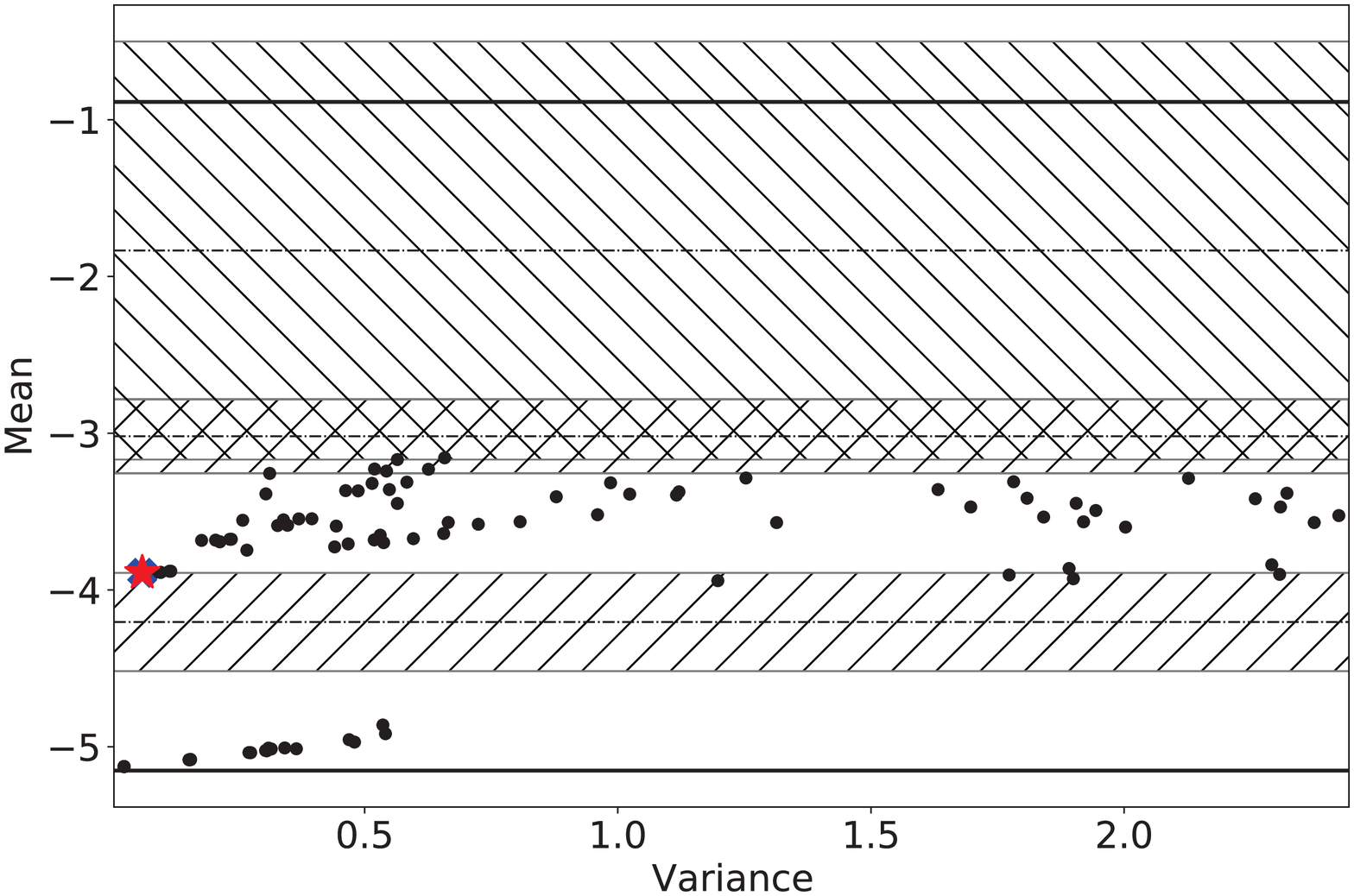}
    \end{minipage}
    \begin{minipage}{0.49\linewidth}
        \centering
        \includegraphics[width=1\linewidth]{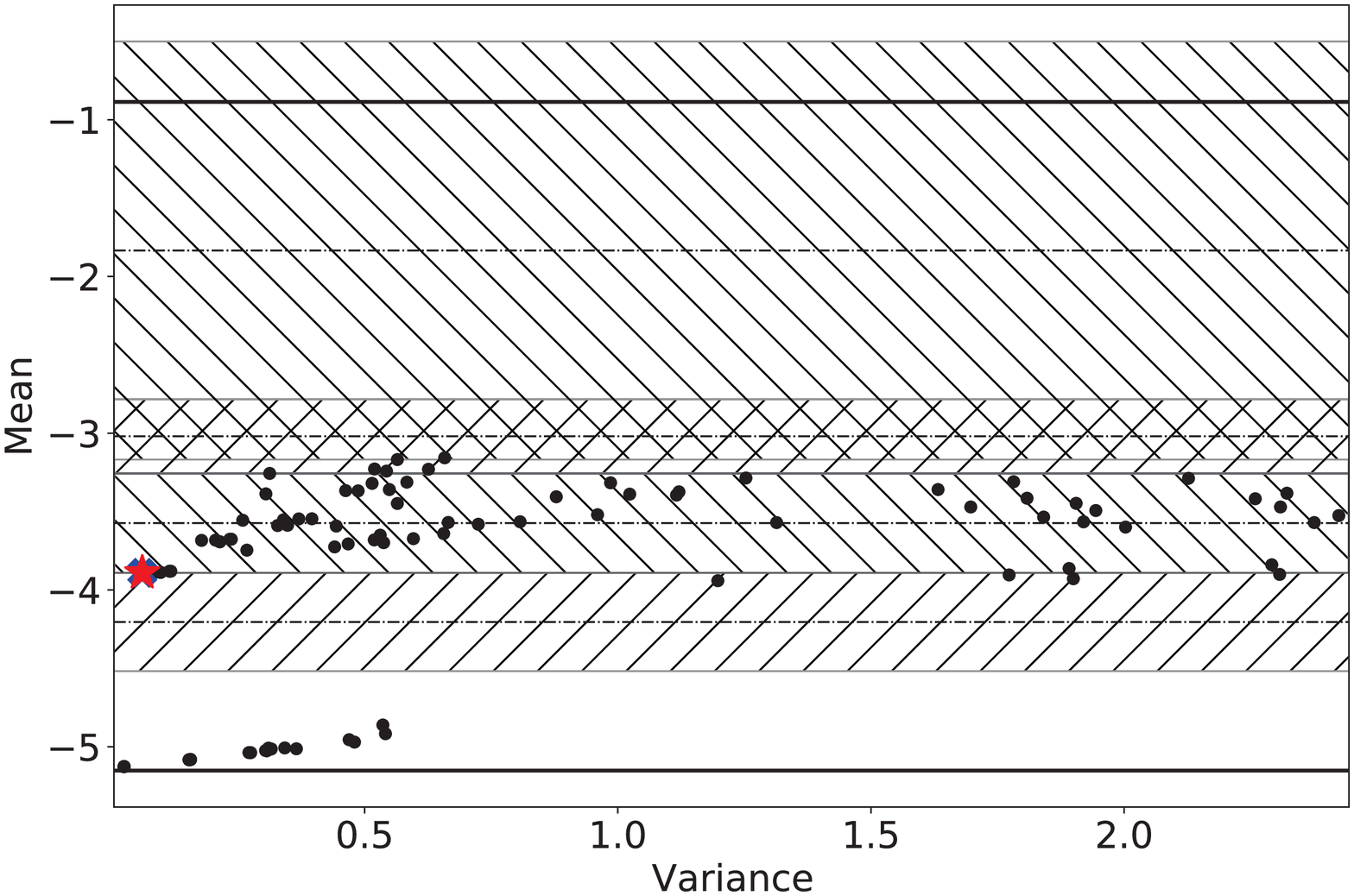}
    \end{minipage}

    \begin{minipage}{0.49\linewidth}
        \centering
        \includegraphics[width=1\linewidth]{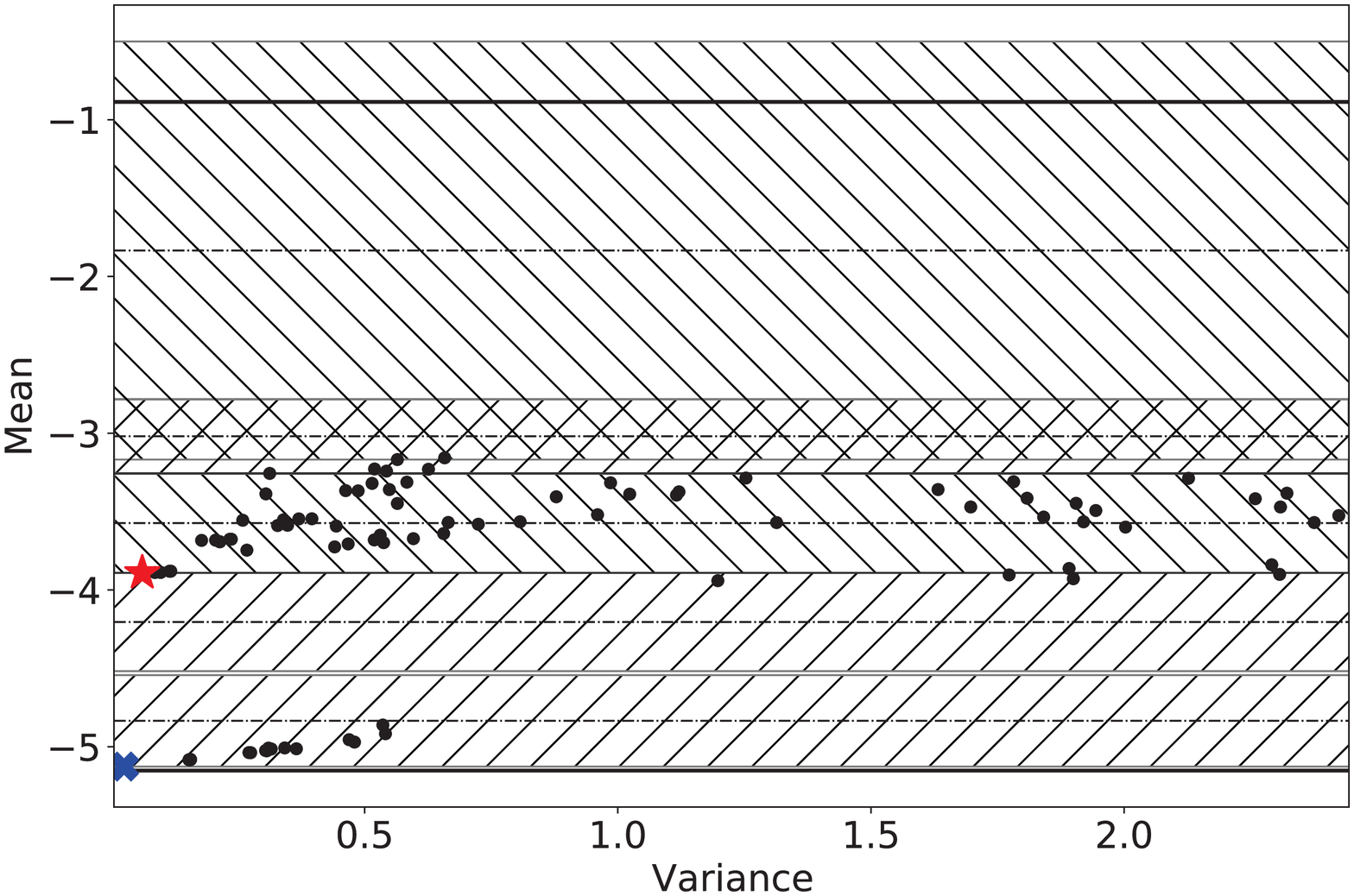}
    \end{minipage}
    \begin{minipage}{0.49\linewidth}
        \centering
        \includegraphics[width=1\linewidth]{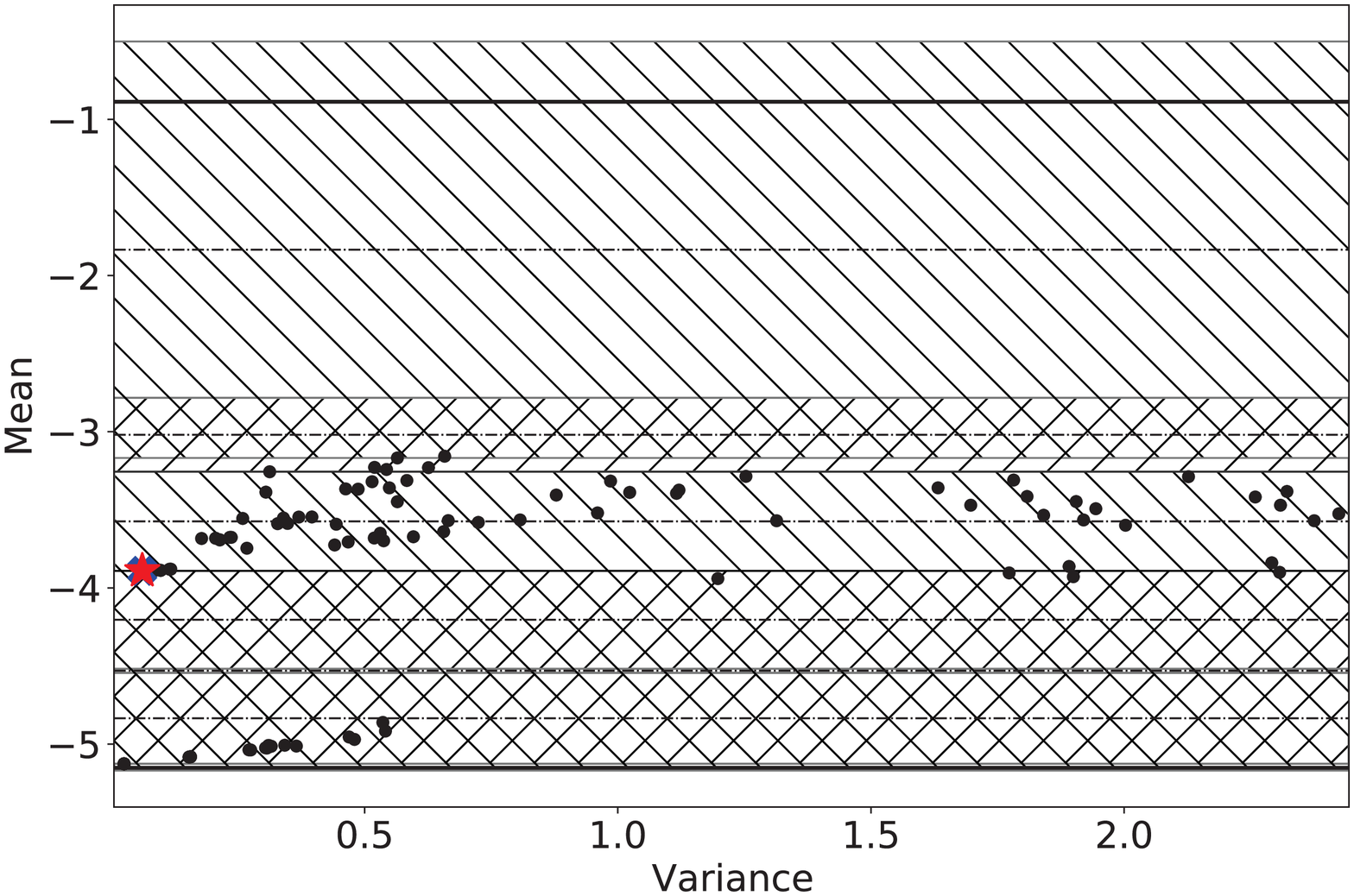}
    \end{minipage}

    \caption{A convergence process of Algorithm~\ref{algo1}, where the blue cross signs represent the optimal policies of auxiliary problems $\mathcal M(y)$ and the red stars represent the best-so-far policies.}\label{fig_convProc}
\end{figure}

As a comparison, we also implement the local optimization algorithm
proposed by \cite{Xia20}. Considering that the mean-variance
optimization of this problem usually has multiple local optima, we
illustrate the performance comparison of these two algorithms in
Fig.~\ref{fig_compGlobalLocal}, where different problem sizes $C \in
\{4, 7, 10, 20, 30, 50 \}$ are used. We can see that our global
algorithm has much better performance and the local algorithm by
\cite{Xia20} may converge to different local optima shown by the
whiskers of standard deviations.

\begin{figure}[htbp]
\centering
\includegraphics[width=0.6\linewidth]{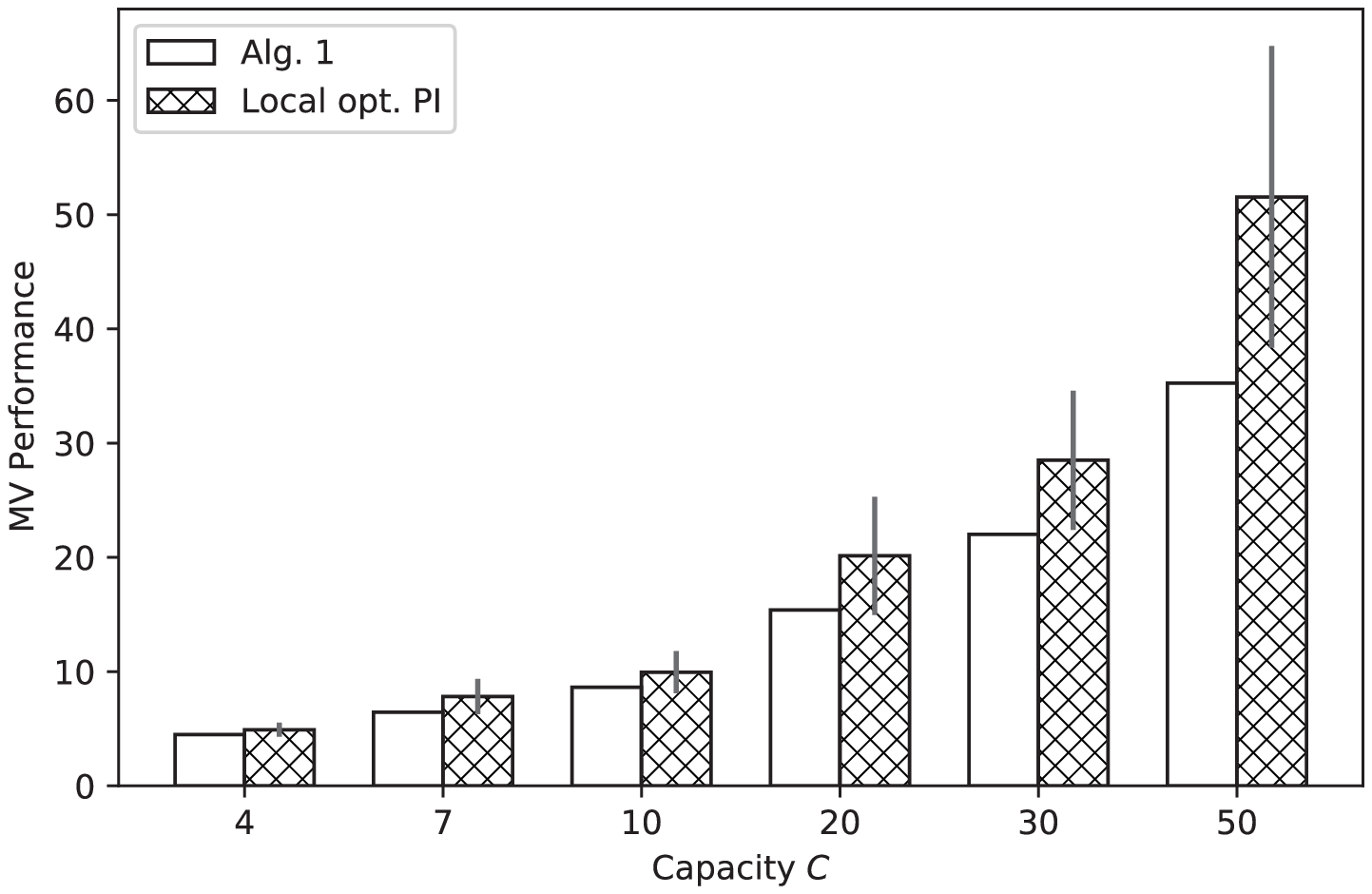}
\caption{Performance comparisons between Algorithm~\ref{algo1} and
the local optimization algorithm by~\cite{Xia20}, with different
inventory capacities.}\label{fig_compGlobalLocal}
\end{figure}

Fig.~\ref{fig_effect_capacity} shows the curves of optimal pseudo
mean-variance $\tilde{\eta}^*(y)$ with respect to the pseudo mean
$y$. For capacity $C=4$, the global optimum is $\eta^*=4.500$ and
the other two local optima are 5.376 and 6.382, which coincide with
the left pair of bars in Fig.~\ref{fig_compGlobalLocal}. The pseudo
mean corresponding to $\eta^*$ is $y^*=-3.891$, which also equals
the mean of the star point in the last subfigure of
Fig.~\ref{fig_convProc}. All these demonstrate that our
Algorithm~\ref{algo1} truly finds the global optimum and the local
algorithm by \cite{Xia20} randomly converges to different local
optima. Moreover, when the capacity increases, the curve of
$\tilde{\eta}^*(y)$ has more local optima and the local algorithm is
more possibly trapped in a worse local optimum. This also explains
the big performance gaps in Fig.~\ref{fig_compGlobalLocal} when the
capacity is large.

\begin{figure}[htbp]
    \centering
    \begin{minipage}{0.49\linewidth}
        \centering
        \includegraphics[width=1\linewidth]{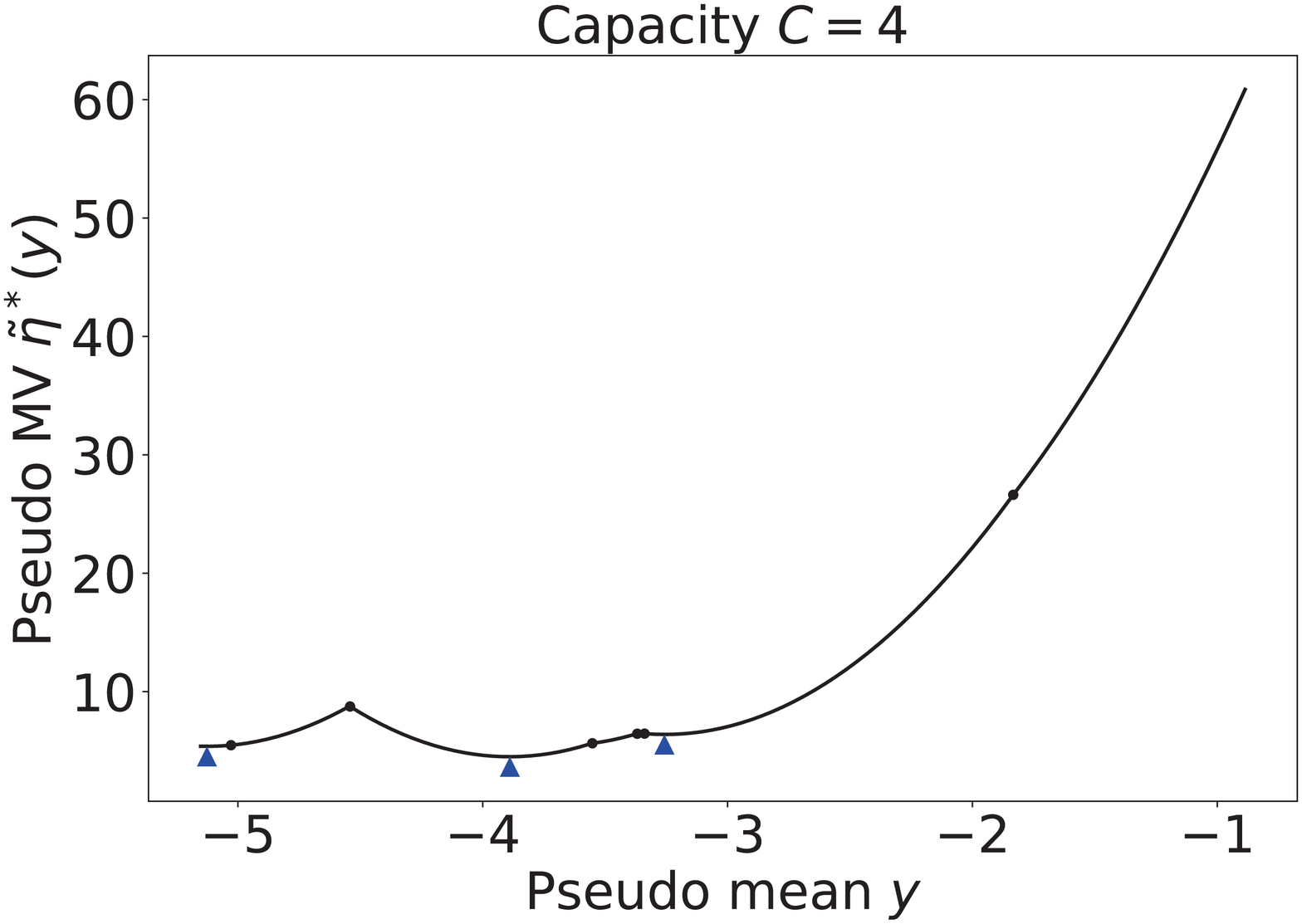}
    \end{minipage}
    \begin{minipage}{0.49\linewidth}
        \centering
        \includegraphics[width=1\linewidth]{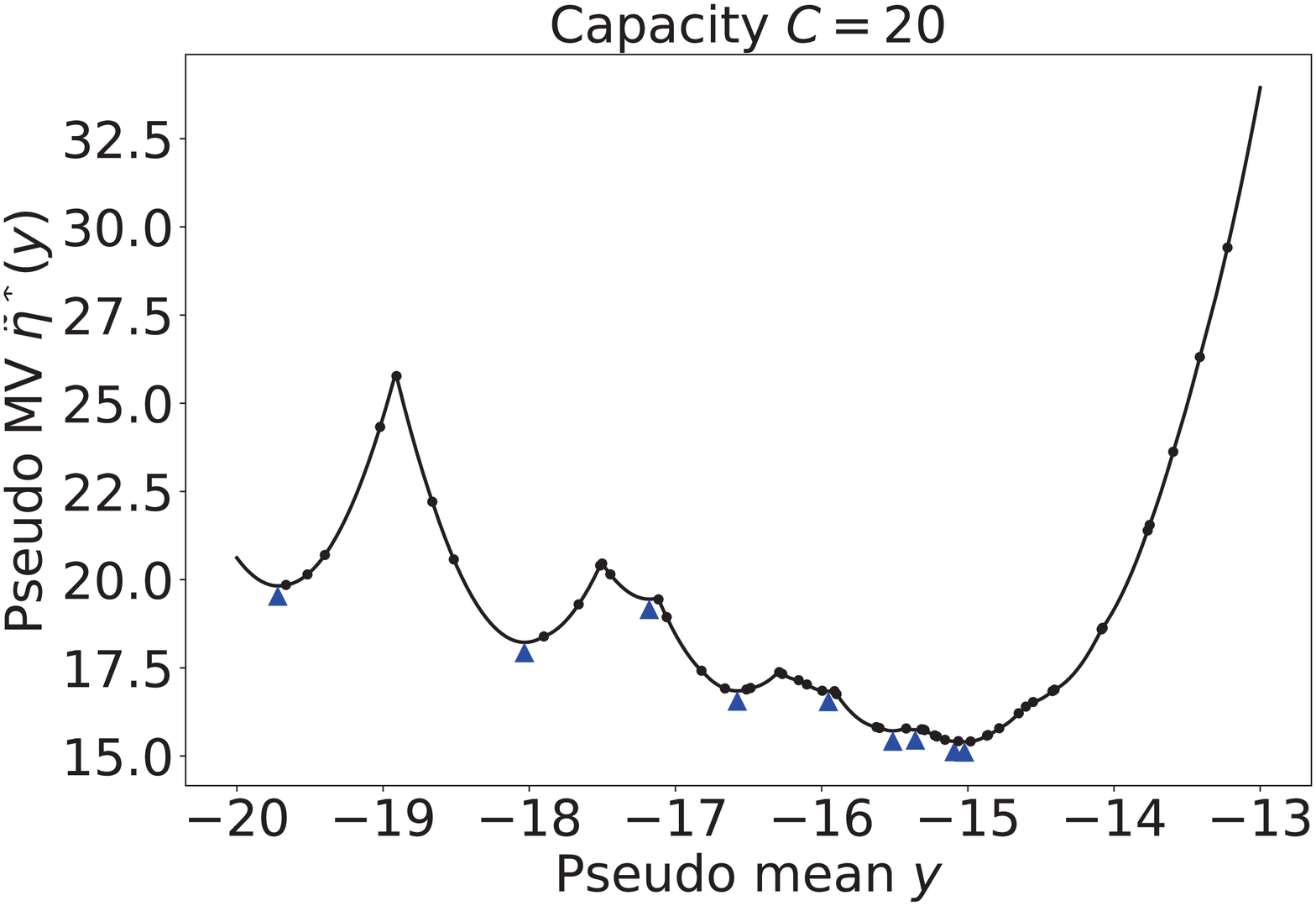}
    \end{minipage}
    \caption{Curves of the optimal pseudo mean-variance $\tilde{\eta}^*(y)$ under different capacities.}\label{fig_effect_capacity}
\end{figure}

Furthermore, we study the effect of risk coefficient $\beta$ on the
curve $\tilde{\eta}^*(y)$, as illustrated in
Fig.~\ref{fig_pseudoMeanMVbeta}. We observe that the problem
complexity is increasing with respect to $\beta$. When $\beta$ is
small, the curve has only a single local optimum, which indicates
that the problem is easy to solve. This is because a mean-variance
problem with a small $\beta$ is approximately equivalent to only
optimizing the mean performance, which is a standard MDP easy to
solve. Oppositely, when $\beta$ is large, the curve has multiple
local optima and the associated optimization problem is difficult to
solve.

\begin{figure}[htbp]
    \centering
    \begin{minipage}{0.49\linewidth}
        \centering
        \includegraphics[width=1\linewidth]{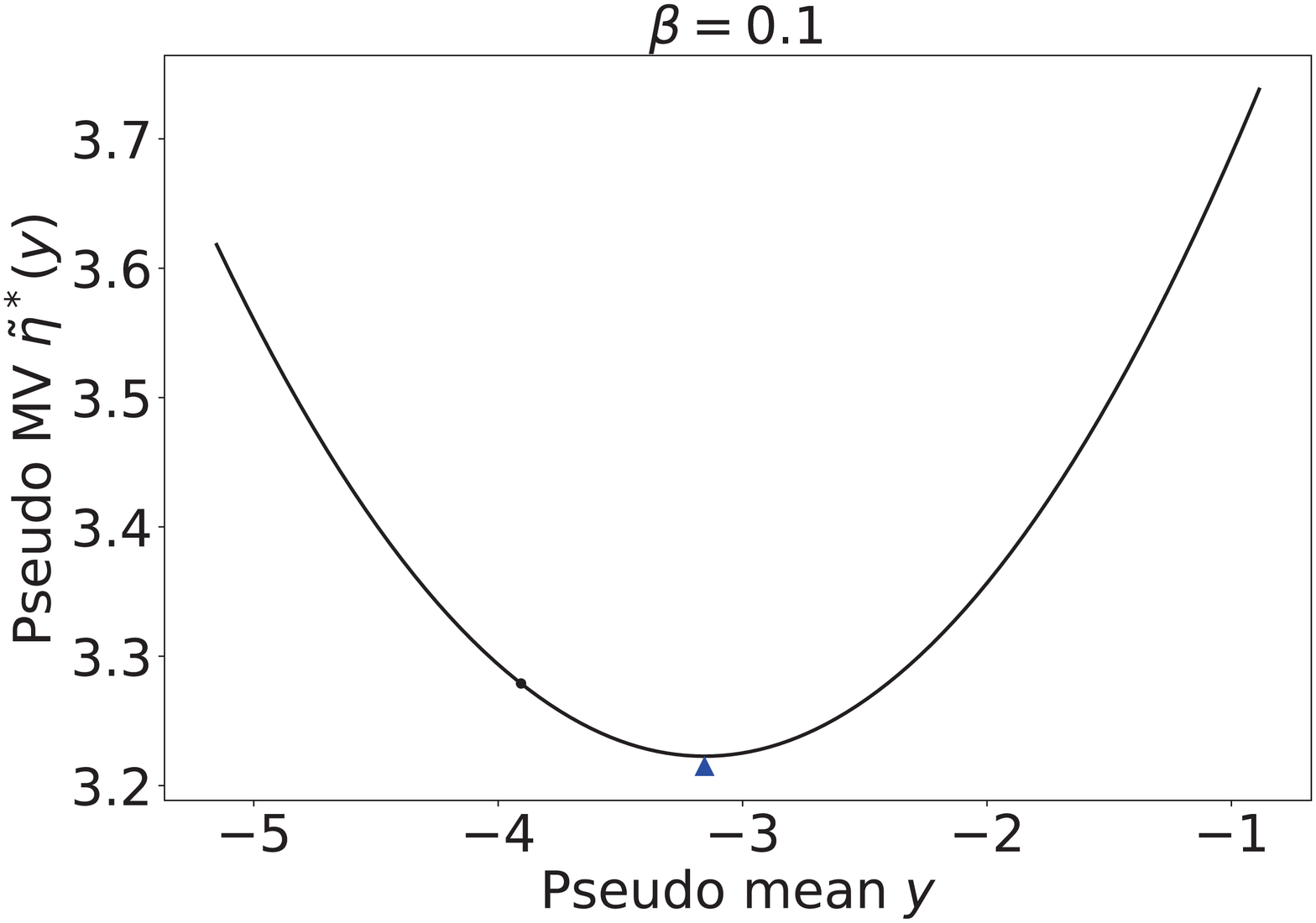}
    \end{minipage}
    \begin{minipage}{0.49\linewidth}
        \centering
        \includegraphics[width=1\linewidth]{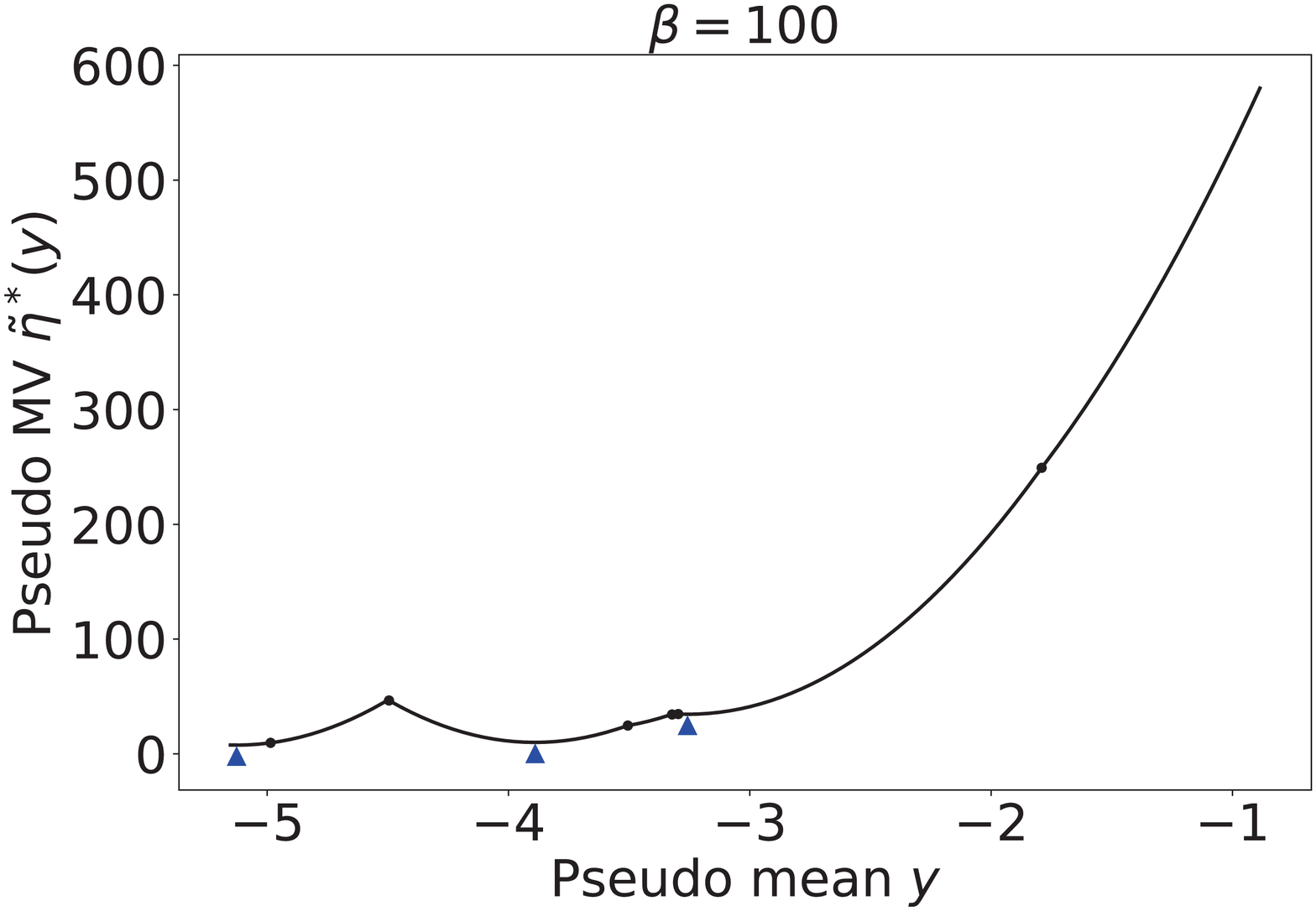}
    \end{minipage}
   \caption{Curves of the optimal pseudo mean-variance $\tilde{\eta}^*(y)$ under different $\beta$'s.}\label{fig_pseudoMeanMVbeta}
\end{figure}

Finally, we study the effect of Lemma~\ref{lemma4} on the algorithm
efficiency. We compare the performance difference between
Algorithm~\ref{algo1} and Algorithm~\ref{algo1}-Plus under different
capacities and $\beta$'s. We observe that Algorithm~\ref{algo1}-Plus
can achieve a significant efficiency improvement when the problem
size (capacity) is large, as shown in
Fig.~\ref{fig_Lemma_MDPsolved}. When $\beta$ is changed, there are
three cases as shown in Fig.~\ref{fig_Lemma_beta_MDPsolved}:
\begin{enumerate}
    \item When $\beta$ is relatively small ($ \leq 0.1 $), the variance is trivial, and the mean-variance optimization is approximately equivalent to a mean optimization problem which is a standard MDP.
    The problem is relatively easy, and these two algorithms have similar efficiency;
    \item When $\beta$ is relatively large ($ \geq 100 $), Lemma~\ref{lemma4} may rarely remove areas with means smaller than $\underline{r}$, which can be illustrated by the intercept in Fig.~\ref{fig_dominatedarea2} when the line slope is large. Thus, these two algorithms also have similar efficiency in this case;
    \item In other cases, Lemma~\ref{lemma4} significantly improves the algorithm convergence speed, and Algorithm~\ref{algo1}-Plus is quite more efficient than Algorithm~\ref{algo1}.
\end{enumerate}

\begin{figure}[htbp]
    \centering
    \subfigure[Effect of Lemma~\ref{lemma4} under different capacities.]
    {\begin{minipage}{0.45\linewidth}
            \centering
            \includegraphics[width=1\linewidth]{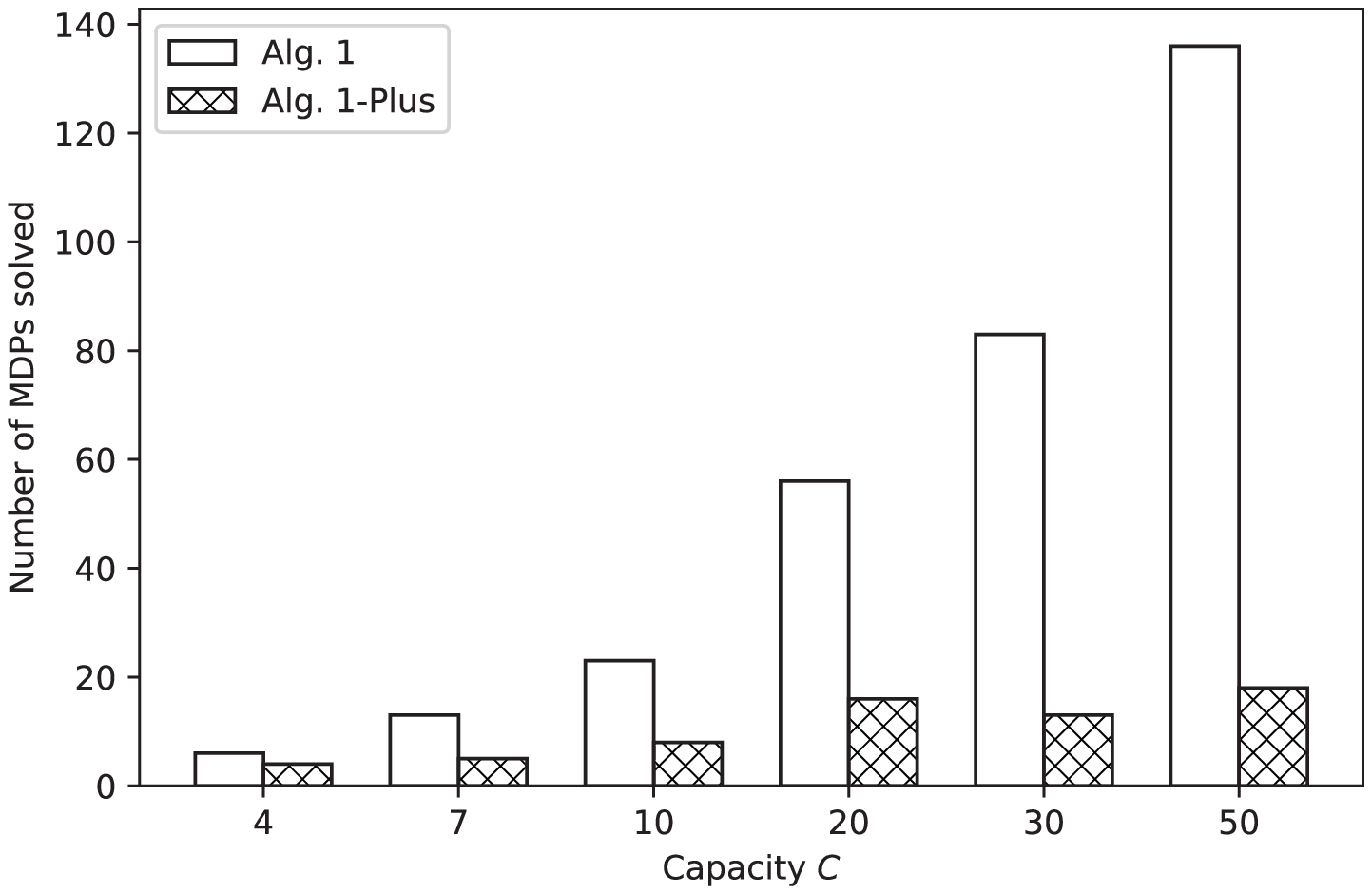}\label{fig_Lemma_MDPsolved}
        \end{minipage}
    }
    \subfigure[Effect of Lemma~\ref{lemma4} under different $\beta$'s.]
    {\begin{minipage}{0.45\linewidth}
            \centering
            \includegraphics[width=1\linewidth]{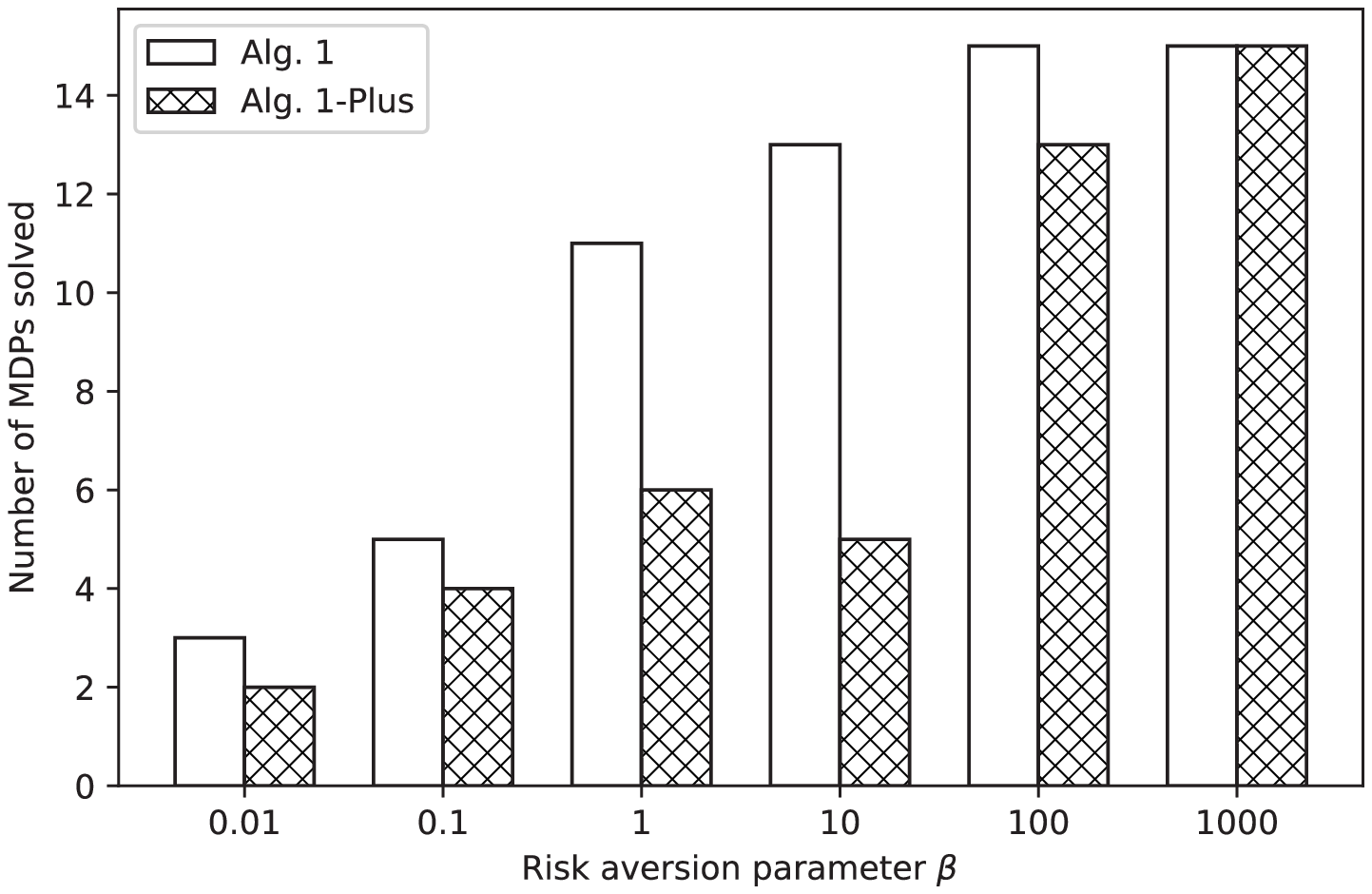}\label{fig_Lemma_beta_MDPsolved}
        \end{minipage}
    }
    \caption{Efficiency comparisons between Algorithm~\ref{algo1} and Algorithm~\ref{algo1}-Plus under different capacities and $\beta$'s.}\label{fig_Lemma4}
\end{figure}

%

\section{Discussion and Conclusion} \label{section_conclusion}
This paper proposes the global algorithms for solving multi-period
mean-variance optimization in the framework of MDPs, which is a
long-standing challenge caused by the failure of dynamic
programming. We convert this problem to a bilevel MDP formulation,
where the inner optimization is a standard MDP $\mathcal M(y)$ for
pseudo mean-variance optimization and the outer one is a single
parameter selection problem optimizing pseudo mean $y$.
Interestingly, the optimal value of $\mathcal M(y)$ is a convex
piecewise quadratic function of $y$. By the square form difference
between the real variance and the pseudo variance, we discover
policy dominance properties to help remove worse policy spaces
iteratively. The global optimum can be found by repeatedly removing
these dominated policy spaces. The convergence and efficiency of our
algorithms are studied both theoretically and experimentally.

Our work demonstrates a promising approach to globally optimize the
steady-state mean-variance metrics in undiscounted MDPs. It is
meaningful to further extend our approach to mean-variance
optimization of discounted MDPs. Another interesting topic is to
develop reinforcement learning algorithms based on our global
optimization approach, which can make our approach implementable in
a data-driven environment.


\end{document}